\newtheorem{thm}{Theorem}[section]
\newtheorem{prop}[thm]{Proposition}
\newtheorem{cor}[thm]{Corollary}
\newtheorem{lem}[thm]{Lemma}
\theoremstyle{definition}
\newtheorem{defn}[thm]{Definition}
\newtheorem{assertion}[thm]{Assertion}
\theoremstyle{remark}
\newtheorem{rem}[thm]{Remark}
\newtheorem{ex}[thm]{Example}
\newcommand{\G}{{\mathcal G}}
\newcommand{\C}{{\mathcal C}}
\newcommand{\E}{{\mathcal E}}
\newcommand{\calH}{{\mathcal H}}
\newcommand{\e}{\varepsilon}
\newcommand{\D}{\mathcal{D}}
\newcommand{\mapright}[1]{%
 \smash{\mathop{%
  \hbox to 1cm{\rightarrowfill}}\limits_{#1}}}
\newcommand{\maprightd}[2]{%
 \smash{\mathop{%
  \hbox to 1.2cm{\rightarrowfill}}\limits^{#1}\limits_{#2}}}
\newcommand{\mapleft}[1]{%
 \smash{\mathop{%
  \hbox to 1cm{\leftarrowfill}}\limits_{#1}}}
\newcommand{\mapleftu}[1]{%
 \smash{\mathop{%
  \hbox to 0.8cm{\leftarrowfill}}\limits^{#1}}}
\newcommand{\maprightu}[1]{%
 \smash{\mathop{%
  \hbox to 1cm{\rightarrowfill}}\limits^{#1}}}
\newcommand{\maprightud}[2]{%
 \smash{\mathop{%
  \hbox to 1cm{\rightarrowfill}}\limits^{#1}_{#2}}}
\newcommand{\mapleftud}[2]{%
 \smash{\mathop{%
  \hbox to 1cm{\leftarrowfill}}\limits^{#1}_{#2}}}
\newcounter{eqn}[section]
\def\theeqn{\textnormal{(\thesection.\arabic{eqn})}}
\def\eqnlabel#1{%
  \refstepcounter{eqn}%
  \label{#1}%
  \leqno{\theeqn}}
\begin{document}

\title[On strong homotopy for quasi-schemoids]{
On strong homotopy for quasi-schemoids \\
}

\footnote[0]{{\it 2010 Mathematics Subject Classification}: 18D35, 05E30, 55U35  \\
{\it Key words and phrases.} 
Association scheme, small category, schemoids, homotopy.

This research was partially supported by a Grant-in-Aid for Scientific
Research HOUGA 25610002
from Japan Society for the Promotion of Science.

Department of Mathematical Sciences, 
Faculty of Science,  
Shinshu University,   
Matsumoto, Nagano 390-8621, Japan   
e-mail:{\tt kuri@math.shinshu-u.ac.jp} 
}

\author{Katsuhiko KURIBAYASHI}
\date{}
   
\maketitle

\begin{abstract}
A quasi-schemoid is a small category with a particular partition of the set of morphisms. 
We define a homotopy relation on the category of quasi-schemoids and study 
its fundamental properties.  
As a homotopy invariant, the homotopy set of self-homotopy equivalences on a quasi-schemoid is introduced.  
The main theorem enables us to deduce that the homotopy invariant for the quasi-schemoid induced by a finite group 
is isomorphic to the automorphism group of the given group.  
\end{abstract}

\section{Introduction} The category $\mathsf{Cat}$ of small categories is a $2$-category whose $2$-morphisms are natural transformations. 
Hoff \cite{Hoff} and Lee \cite{L} have introduced a notion of strong homotopy 
on $\mathsf{Cat}$ using $2$-morphisms; see also \cite{Hoff_1975, L-T-W, M}. 
Thus if the objects we investigate 
have the structure of small categories, we may develop homotopy theory for them with the underlying small categories.   

Association schemes play crucial roles in the study of algebraic combinatorial theory, design and coding theory; 
see for example \cite{P-Z} and 
references contained therein. Very recently, such combinatorial objects were used in investigating 
continuous-time quantum walks from a mathematical perspective; see \cite{G, G-H-R}. 
This motivates us to consider their classification problem.  
Though it is important to classify such subjects in the strict sense \cite{H-M}, namely up to isomorphism,  
one might make a rough classification of association schemes relying on abstract homotopy theory. 
Since association schemes can be regarded as complete graphs, and hence objects in $\mathsf{Cat}$,  
the completeness allows us to deduce that every association scheme is contractible in the sense of strong homotopy.
In fact,  each association scheme is equivalent to the trivial category as a category. 
Thus we need an appropriate category instead of $\mathsf{Cat}$ in which  
to develop meaningful homotopy theory for combinatorial objects such as association schemes.   


Matsuo and the author \cite{K-M} have proposed the notion of {\it quasi-schemoids} generalizing that of association schemes from 
a small categorical point of view. 
Roughly speaking, the new object is indeed a small category with suitable coloring for morphisms. 
In this paper, we define a homotopy relation on the category $q\mathsf{ASmd}$ of quasi-schemoids extending that due to Hoff, Lee and Minian,  
and study the fundamental properties of homotopy. 
In particular, the group (of homotopy classes) of self-homotopy equivalences on a quasi-schemoid is investigated.    

An important point here is that $q\mathsf{ASmd}$ admits a $2$-category structure under which 
the category $\mathsf{Cat}$ is embedded into the category $q\mathsf{ASmd}$ as a $2$-category; 
see Theorem \ref{thm:2-cat} below. Thus one might expect a relevant notion of 
a homotopy group  for a quasi-schemoid, as in 
\cite{Hoff_1975}, and an application of categorical matrix Toda brackets due to Hardie, Kamps and Marcum \cite{H-K-M} to our category 
$q\mathsf{ASmd}$. As for homological algebra on schemoids, in order to develop categorical representation theory, 
we may consider the Bose-Mesner algebra introduced in \cite[Section 2]{K-M} and 
an appropriate functor category with a quasi-schemoid and an abelian category as source and target, respectively; see \cite[Sections 5 and 6]{K-M} for first steps in this direction.   
These topics are addressed in subsequent work. 

Though {\it association schemoids} and their category $\mathsf{ASmd}$ are also introduced in \cite{K-M}, 
we do not develop homotopy theory in $\mathsf{ASmd}$ in this paper; see the Appendix. 

This manuscript is organized as follows.
In Section 2, we recall the definition of a quasi-schemoid with examples. 
Section 3 explains a homotopy relation which we use in the category of quasi-schenoids. 
Section 4 is devoted to describing rigidity properties of  homotopy for association schemes and groupoids. 
In particular, our main theorem (Theorem \ref{thm:haut(groupoid)}) asserts that  the group of self-homotopy 
equivalences on the quasi-schemoid arising from a groupoid includes  
the group of autofunctors on the given groupoid. 
It turns out that the group of self-homotopy equivalences on a finite group 
is isomorphic to the automorphism group of the given group.

\section{A brief review of quasi-schemoids}

We begin by recalling the definition of an association scheme. 
Let $X$ be a finite set and $S$ a partition of the Cartesian square $X\times X$, namely a subset of the power set $2^{X\times X}$ with 
$X\times X = \amalg_{\sigma\in S}\sigma$, 
which contains the subset $1_X :=\{ (x, x) \mid x \in X\}$ as an element. 
Assume further that for each 
$g \in S$, the subset $g^*:=\{(y, x) \mid (x, y) \in g\}$ is in $S$. Then the pair $(X, S)$ is called an  
{\it association scheme}  if for all 
$e, f, g \in S$, there exists an integer $p_{ef}^g$ such that for any $(x, z) \in g$,  
$$
p_{ef}^g=\sharp \{y \in X \mid (x, y)\in e \ \text{and} \ (y, z) \in f \}. 
$$
Observe that $p_{ef}^g$ is independent of the choice of $(x, z) \in g$. 

Let $G$ be a finite group. Define a subset $G_f$ of $G\times G$ for $f\in G$ by 
$G_f:=\{(k, l) \mid k^{-1}l = f\}$. Then we have an association scheme $S(G) = (G, [G])$, where 
$[G]=\{G_f\}_{f\in G}$. 
Moreover, the correspondence $S( \ )$ induces a functor from the category $\mathsf{Gr}$ of finite groups to the category  
$\mathsf{AS}$ of association schemes in the sense of Hanaki \cite{H}; see also \cite[Section 5.5]{Z_book}.

We here recall the definition of a quasi-schemoid, which is a categorical counterpart of an association scheme.  

\begin{defn}(\cite[Definition 2.1]{K-M})\label{defn:schemoid}
Let $\C$ be a small category; that is, the class of the objects of the category $\C$ is a set. 
Let $S:=\{\sigma_l\}_{l\in I}$ be a partition of the set $mor(\C)$
of all morphisms in $\C$. We call the pair $(\C, S)$ a {\it quasi-schemoid} if 
the set $S$ satisfies the condition that for a triple $\sigma, \tau, \mu \in S$ 
and for any morphisms $f$, $g$ in $\mu$, as a set 
$$
(\pi_{\sigma\tau}^\mu)^{-1}(f) \cong (\pi_{\sigma\tau}^\mu)^{-1}(g), 
$$ 
where $\pi_{\sigma\tau}^\mu : \pi_{\sigma\tau}^{-1}(\mu) \to \mu$ denotes 
the restriction of the concatenation map 
$\pi_{\sigma\tau} : \sigma \times_{ob(\C)}\tau:=\{(f, g) \in \sigma \times \tau \mid s(f) = t(g)\} \to mor(\C)$.
\end{defn}

We denote by $p_{\sigma\tau}^\mu$ the cardinality of the set $(\pi_{\sigma\tau}^\mu)^{-1}(f)$. 

For an association scheme $(X, S)$, we define a quasi-schemoid $\jmath(X, S)$ by the pair $(\C, V)$ for which 
$ob(\C)=X$, $\text{Hom}_\C(y, x) =\{(x, y)\} \subset X\times X$ and $V =S$, where the composite of morphisms 
$(z, x)$ and $(x, y)$ is defined by $(z, x) \circ (x, y) = (z, y)$.

For a groupoid ${\mathcal H}$, we have 
a quasi-schemoid $\widetilde{S}({\mathcal H}) = (\widetilde{\mathcal H}, S)$ , where $ob (\widetilde{\mathcal H}) = mor({\mathcal H})$ 
and 
$$
\text{Hom}_{\widetilde{{\mathcal H}}}(g, h) = \begin{cases}
\{(h, g)\}  & \text{if}  \ \  t(h) = t(g)  \\
\varnothing & \text{otherwise}. 
\end{cases}
$$
The partition $S=\{ \G_f \}_{f \in mor({\mathcal H})}$ is defined by $\G_f = \{(k, l) \mid k^{-1}l = f\}$. 
We refer the reader to \cite[Section 2]{K-M} for more examples of quasi-schemoids.  

Let $(\C, S)$ and $(\E, S')$ be quasi-schemoids. It is readily seen that  $(\C\times \E, S\times S')$ is a quasi-schemoid, where 
$S\times S' =\{\sigma\times \tau \mid \sigma \in S, \tau \in S'\} \subset mor(\C)\times mor(\E)$. In what follows, we write 
$(\C, S)\times (\E, S')$ for the product. 

\begin{defn}\label{defn:morphisms}
Let $(\C, S)$ and $(\E, S')$ be quasi-schemoids. A functor $F: \C \to \E$ is a {\it morphism} of quasi-schemoids 
if for any $\sigma$ in $S$, 
$F(\sigma) \subset \tau$ for some $\tau$ in $S'$. 
We then write $F : (\C, S) \to (\E, S')$ for the morphism.  
\end{defn}

We denote by $q\mathsf{ASmd}$ the category of quasi-schemoids and their morphisms. 
Let $\C$ be a small category and  $K(\C)=(\C, S)$ 
the discrete quasi-schemoid associated with $\C$; that is, the partition $S$ 
is defined by $S=\{\{f\}\}_{f \in mor(\C)}$.  
The correspondence $K$ induces a pair of adjoints  
$
\xymatrix@C17pt@R10pt{
K :  \mathsf{Cat}\ar@<0.5ex>[r]^-{} &q\mathsf{ASmd}  : U  \ar@<0.5ex>[l]^-{} 
 }
$
in which $U$ is the forgetful functor and the right adjoint to $K$.
It is remarkable that the functor $K$ is a fully faithful embedding; see \cite[Remark 3.1, Diagram (6.1)]{K-M}. 
Furthermore, the correspondences $\widetilde{S}( \ )$ and $\jmath$ mentioned above give rise to functors. With such functors,  
we obtain a commutative diagram of categories 
$$
\xymatrix@C35pt@R20pt{
\mathsf{Gpd } \ar[r]^-{{\widetilde S}( \ )} & 
q\mathsf{ASmd} \ar@<1ex>[r]^-{U} 
& \mathsf{Cat}, \ar@<1ex>[l]^-{K}  \\
\mathsf{Gr} \ar[u]^\imath \ar[r]^-{S( \ )}  & \mathsf{AS} \ar[u]_{\jmath} 
}
\eqnlabel{add-1}
$$ 
where $\mathsf{Gpd}$ denotes the category of groupoids and $\imath : \mathsf{Gr} \to \mathsf{Gpd}$ is the natural fully faithful embedding;  
see \cite[Section 3, Diagram (6.1)]{K-M}. Observe that the composite $U\circ {\widetilde S}( \ )$ is {\it not} the usual embedding 
from $\mathsf{Gpd}$ to $\mathsf{Cat}$. 

The homotopy category of $\mathsf{Cat}$ in the sense of Thomason is equivalent to that of topological spaces 
\cite{Latch, Thomason}. Moreover, a result of \cite[Theorem 3.2]{K-M} asserts that the functors 
$S( \ )$ and $\widetilde{S}( \ )$ are  faithful and that $\jmath$ is a fully faithful embedding. 
Thus quasi-schemoids can be regarded as {\it generalized spaces} and as {\it generalized groups} in some sense.

\section{Strong homotopy}

We extend the notion of strong homotopy in $\mathsf{Cat}$ in the sense of Hoff \cite{Hoff_1975} and Lee \cite{L}  to that in $q\mathsf{ASmd}$.  
Let $[1]$ be the category consisting of two objects $0$ and $1$ and only one non-trivial morphism 
$u : 0 \to 1$. We write $I$ for a discrete schemoid of the form $K([1])$. 

\begin{defn}\label{defn:Homotopy}
Let $F, G : (\C, S) \to (\D, S')$ be morphisms between the schemoids $(\C, S)$ and  $(\D, S')$ in $q\mathsf{ASmd}$. 
We write $H: F \Rightarrow G$ if there exists a morphism $H : (\C, S) \times I \to (\D, S')$ in  $q\mathsf{ASmd}$ such that 
$H\circ \e_0 = F$ and $H\circ \e_1 = G$, where $(\C, S) \times I$ is the product of the quasi-schemoids mentioned in Section 2
and 
$\e_i : (\C, S) \to (\C, S) \times I$ is the morphism of quasi-schemoids defined by 
$\e_i(a) = (a, i)$ for an object $a$ in $\C$ and $\e_i(f)= (f,  1_i)$ for a morphism $f$ in $\C$.  
We call the morphism $H$ above a {\it homotopy} from $F$ to $G$. 

A morphism $F$ is {\it equivalent} to $G$, 
denoted $F\sim G$, if $H: F \Rightarrow G$ or $H: G \Rightarrow F$ for some 
$H : (\C, S) \times I \to (\D, S')$ in  $q\mathsf{ASmd}$. 
\end{defn}

\begin{rem}\label{rem:square} Suppose that there exists a homotopy $H : (\C, S) \times I \to (\D, S')$ from 
$F$ to $G$. Then for any morphism $f \in mor(\C)$, we have a commutative diagram 
$$
\xymatrix@C35pt@R20pt{
{H(s(f), 0)}  \ar@{->}[r]^{H(1_{s(f)}, u)} \ar[dr]^{H(f,u)}   \ar@{->}[d]_{F(f)=H(f, 1_0)}  &  {H(s(f), 1)}  \ar@{->}[d]^{H(f, 1_1)=G(f)}\\
{H(t(f), 0)} \ar@{->}[r]_{H(1_{t(f)}, u)}  &  {H(s(f), 1)}
}
$$
in the underlying category $\D$. Here we use the same notation as in Definition \ref{defn:Homotopy}. 

Since $H$ is a morphism of quasi-schemoids, it follows that $H(g, u)$ and $H(h, u)$ are in the same element of $S'$
if $g$ and $h$ are in the same element of $S$. 
We observe that, in {\it each} square for a given morphism $f$, morphisms $H(1_{s(f)}, u)$ and $H(1_{t(f)}, u)$ are 
in the same element of $S'$ if $1_{s(f)}$ and $1_{t(f)}$ are in the same element of $S$. 
In fact, the condition is satisfied if the quasi-schemoid comes from an association scheme. 
As for the diagonal arrows,  in order to show the well-definedness of the homotopy $H$ in 
$q\mathsf{ASmd}$, we need to verify that 
the arrow $H(f, u)$ in a square and $H(g, u)$ in {\it other} squares are in the same element of $S'$  
if $g$ is in the same element of $S$ as that containing $f$.    
\end{rem}

In what follows, we will define a homotopy assigning objects and morphisms 
in $\D$ to those in $\C \times I$ as in the square above.   

Let $F: (\C, S) \to (\D, S')$ be a morphism of quasi-schemoids. Then for any $f : i \to j$ in $mor(\C)$, 
we have a commutative diagram 
$$
\xymatrix@C35pt@R20pt{
{F(i)}  \ar@{->}[r]^{F(1_i)} \ar[dr]^{F(f)}   \ar@{->}[d]_{F(f)}  &  {F(i)}  \ar@{->}[d]^{F(f)}\\
{F(j)} \ar@{->}[r]_{F(1_j)}  &  {F(j)}
}
$$
in the underlying category $\D$. If $1_i$ and $1_j$ are in the same element of $S$,   
$F(1_i)$ and $F(1_j)$ are in the same element of $S'$. 
The diagram gives rise to a homotopy from $F$ to itself. 
 
\begin{defn} \label{defn:Homotopy_relation} Let $(\C, S)$ and $(\D, S')$ be a quasi-schemoids.  
For morphisms $F, G : (\C, S) \to (\D, S')$, $F$ is {\it homotopic} to $G$, denoted $F \simeq G$, if there exists a finite sequence of morphisms 
$F= F_0, F_1, ..., F_n= G$ such that  $F_k \sim F_{k+1}$ for any $k = 0, ..., n$.  
We say that $(\C, S)$ is {\it homotopy equivalent} to $(\D, S')$ if there exist morphisms $F : (\C, S) \to (\D, S')$ and $G :  (\D, S') \to  (\C, S)$ such that $FG \simeq 1$ and $GF\simeq 1$. In this case, $F$ is called 
a {\it homotopy equivalence}.  
\end{defn}

The usual argument gives the following result.

\begin{prop} \label{prop:equivalence_relations}
The homotopy relation $\simeq$ in the category $q\mathsf{ASmd}$ defined in  Definition \ref{defn:Homotopy_relation} is an equivalence 
relation which is preserved by compositions of morphisms. 
\end{prop}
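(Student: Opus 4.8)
The plan is to reduce everything to the relation $\sim$ and then propagate it to $\simeq$ along finite chains. First I would verify that $\sim$ is reflexive. Given a morphism $F : (\C, S) \to (\D, S')$, the constant assignment $H(a, i) := F(a)$ and $H(f, m) := F(f)$, for every object $a$ of $\C$, every $i \in \{0, 1\}$ and every morphism $m$ of $[1]$, defines a functor $\C \times [1] \to \D$; it is a morphism of quasi-schemoids because $H(\sigma \times \{m\}) = F(\sigma)$ lies in a single block of $S'$ for each $\sigma \in S$ and each $m$, exactly as observed in the paragraph preceding Definition \ref{defn:Homotopy_relation}. Since $H \circ \e_0 = F = H \circ \e_1$, this yields $F \sim F$. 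As $\sim$ is symmetric by its very definition, the relation $\simeq$ --- ``joined by a finite $\sim$-chain'' --- is automatically reflexive, symmetric (reverse the chain) and transitive (concatenate chains), hence an equivalence relation.

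For compatibility with composition, the key step is the following congruence property of $\sim$: if $F_1 \sim F_2 : (\C, S) \to (\D, S')$, then $G F_1 \sim G F_2$ for every morphism $G : (\D, S') \to (\E, S'')$, and $F_1 L \sim F_2 L$ for every morphism $L : (\mathcal{B}, T) \to (\C, S)$. Assume, without loss of generality, a homotopy $H : F_1 \Rightarrow F_2$, that is, $H : (\C, S) \times I \to (\D, S')$ with $H \circ \e_0 = F_1$ and $H \circ \e_1 = F_2$. For the first assertion I would use that $G H$ is again a morphism of quasi-schemoids (a composite of morphisms of quasi-schemoids is one) and that $G H \circ \e_i = G \circ (H \circ \e_i)$, so $G H : G F_1 \Rightarrow G F_2$. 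For the second, I would form the product morphism $L \times 1_I : (\mathcal{B}, T) \times I \to (\C, S) \times I$, which is a morphism of quasi-schemoids because $(L \times 1_I)(\sigma \times \tau) = L(\sigma) \times \tau$, and I would check the bookkeeping identity $(L \times 1_I) \circ \e_i^{\mathcal{B}} = \e_i^{\C} \circ L$ on objects and on morphisms; then $H \circ (L \times 1_I)$ is a morphism of quasi-schemoids with $H \circ (L \times 1_I) \circ \e_0^{\mathcal{B}} = F_1 L$ and $H \circ (L \times 1_I) \circ \e_1^{\mathcal{B}} = F_2 L$, so $H \circ (L \times 1_I) : F_1 L \Rightarrow F_2 L$. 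The case $H : F_2 \Rightarrow F_1$ is handled identically.

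Finally I would pass to $\simeq$. Given a $\sim$-chain $F_1 = \Phi_0 \sim \Phi_1 \sim \cdots \sim \Phi_n = F_2$, applying $G \circ (-)$ and $(-) \circ L$ term by term produces $\sim$-chains $G F_1 = G \Phi_0 \sim \cdots \sim G \Phi_n = G F_2$ and $F_1 L = \Phi_0 L \sim \cdots \sim \Phi_n L = F_2 L$, whence $G F_1 \simeq G F_2$ and $F_1 L \simeq F_2 L$. Consequently, if $F_1 \simeq F_2 : (\C, S) \to (\D, S')$ and $G_1 \simeq G_2 : (\D, S') \to (\E, S'')$, then $G_1 F_1 \simeq G_1 F_2 \simeq G_2 F_2$, so $G_1 F_1 \simeq G_2 F_2$; this is the asserted preservation under composition.

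I do not expect a serious obstacle --- this is the ``usual argument'' alluded to in the text --- but two points deserve a little care. The first is the verification that the constant homotopy respects the partition $S'$: this is genuinely needed rather than automatic, and is precisely the content of the remark made before Definition \ref{defn:Homotopy_relation}. The second is the identity $(L \times 1_I) \circ \e_i^{\mathcal{B}} = \e_i^{\C} \circ L$ together with the closure of the property ``being a morphism of quasi-schemoids'' under pre- and post-composition with morphisms of quasi-schemoids; both are immediate from Definition \ref{defn:morphisms}.
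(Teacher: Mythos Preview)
Your proof is correct and is exactly the ``usual argument'' the paper invokes; the paper gives no details beyond that phrase, so there is nothing to compare against beyond noting that you have supplied the standard verification in full.
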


We denote by $\simeq_S$ the homotopy relation, which is called strong homotopy, 
in the category $\mathsf{Cat}$ due to Hoff \cite{Hoff_1975}, Lee \cite{L} 
and Minian \cite{M}.  The relation is defined in 
the same way  as in Definitions \ref{defn:Homotopy} and \ref{defn:Homotopy_relation}.

\begin{prop}\label{prop:homotopy}
Let $F, G : (\C, S) \to (\D, S')$ be morphisms in $q\mathsf{ASmd}$. Then $U(F) \simeq_S U(G)$ if $F\simeq G$. 
Assume further that $(\C, S) = K(\C)$, namely a discrete schemoid. 
Then $F\simeq G$ if and only if $U(F) \simeq_S U(G)$. 
\end{prop}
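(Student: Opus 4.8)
The plan is to establish the two assertions separately. For the first, suppose $F \simeq G$ via a chain $F = F_0 \sim F_1 \sim \cdots \sim F_n = G$; since $\simeq_S$ is an equivalence relation it suffices to treat a single link, say $H : (\C,S)\times I \to (\D,S')$ with $H\circ\e_0 = F$, $H\circ\e_1 = G$. First I would apply the forgetful functor $U$ and observe that $U$ carries products to products in the relevant sense: the underlying category of $(\C,S)\times I$ is $\C\times[1]$, because $I = K([1])$ and $K$ is a fully faithful embedding with $U\circ K = \mathrm{id}$, so $U((\C,S)\times I) = \C\times[1]$. Hence $U(H) : \C\times[1]\to\D$ is a functor with $U(H)\circ\e_0 = U(F)$ and $U(H)\circ\e_1 = U(G)$, which is exactly a strong homotopy in $\mathsf{Cat}$ in the sense of Hoff, Lee and Minian. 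Concatenating the links gives $U(F)\simeq_S U(G)$.

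For the converse under the hypothesis $(\C,S) = K(\C)$, suppose $U(F)\simeq_S U(G)$; again reduce to a single strong-homotopy link $h : \C\times[1]\to\D$ with $h\circ\e_0 = U(F) = F$ and $h\circ\e_1 = U(G) = G$ (using $U\circ K = \mathrm{id}$ to identify the underlying functors with $F,G$ themselves). The point is that $h$ automatically respects the partitions: since $S = \{\{f\}\}_{f\in mor(\C)}$ is the discrete partition, the partition on $\C\times[1] = U(K(\C)\times I)$ is $\{\{(f,\phi)\}\}$, and any functor at all sends a singleton into whichever block of $S'$ contains its image. Thus $h$ is automatically a morphism of quasi-schemoids $H : K(\C)\times I \to (\D,S')$, giving $F\sim G$ in $q\mathsf{ASmd}$; concatenating yields $F\simeq G$.

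The one thing that needs care — and is the main (though mild) obstacle — is pinning down precisely that $U((\C,S)\times I) = \C\times[1]$ as a category together with the identification of the maps $\e_i$ on both sides. This uses that $I = K([1])$, that $U\circ K$ is the identity functor on $\mathsf{Cat}$ (which follows from $K$ being a fully faithful embedding with $U$ its right adjoint, as recalled in Section 2), and that $U$ takes the schemoid product $(\C,S)\times I$ to the categorical product of the underlying categories; the latter is immediate from the description of $S\times S'$ and the product category, since the underlying category of $(\C,S)\times(\E,S')$ is manifestly $\C\times\E$. Once this bookkeeping is in place, both directions are formal, the only substantive input being the observation that a discrete partition imposes no constraint on a functor beyond functoriality, so every functor out of a discrete schemoid is a morphism of quasi-schemoids.
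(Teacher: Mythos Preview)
Your proposal is correct and follows essentially the same route as the paper's proof. The paper's argument for the first assertion is identical to yours (apply $U$, use $U((\C,S)\times I)=\C\times[1]$); for the converse under the discrete hypothesis, the paper packages your observation that ``singletons impose no constraint'' as the adjunction bijection $\text{Hom}_{q\mathsf{ASmd}}(K(\C\times[1]),(\D,S'))\cong\text{Hom}_{\mathsf{Cat}}(\C\times[1],U(\D,S'))$ after noting $K(\C)\times I = K(\C\times[1])$, which is exactly the content of your hands-on argument.
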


\begin{proof}
Let $H$ be a homotopy between $F$ and $G$. Since $U((\C, S)\times I) = U((\C, S))\times [1]$, it follows that $U(H)$ is a homotopy between 
$U(F)$ and $U(G)$. We have the first of the results. 

Suppose that $(\C, S)$ is the discrete quasi-schemoid $K(\C)$. The forgetful functor $U$ gives rise to 
a natural bijection  
\begin{eqnarray*}
U : \text{Hom}_{q\mathsf{ASmd}}(K(\C)\times I, (\D, S')) &=& \text{Hom}_{q\mathsf{ASmd}}(K(\C \times [1]), (\D, S')) \\
&\stackrel{\cong}{\to}&
\text{Hom}_{\mathsf{Cat}}(\C \times [1], U((\D, S'))).
\end{eqnarray*}
This implies that $L : \C\times [1] \to  U((\D, S'))$ is a homotopy from $U(F)$ to $U(G)$ 
if $U^{-1}(L)$ is a homotopy from $F$ to $G$. We have the result. 
\end{proof}

Let $\text{aut}((\C, S))$ denote the monoid of self-homotopy equivalences on $(\C, S)$ in $q\mathsf{ASmd}$; that is, 
the composition of the equivalences gives rise to the product in the monoid. 
Then the monoid structure gives a group structure on 
the set of equivalence classes 
$$
h\text{aut}((\C, S)) := \text{aut}((\C, S))/\simeq . 
$$  
We observe that the group $h\text{aut}((\C, S))$ is a homotopy invariant for quasi-schemoids. 

Proposition \ref{prop:homotopy} enables us to deduce that the functor 
$U$ induces a map 
$$
\widetilde{U} : [(\C, S), (\D, S')]:= \text{Hom}_{q\mathsf{ASmd}}((\C, S), (\D, S'))/\simeq \ \longrightarrow
\text{Hom}_{\mathsf{Cat}}(\C, \D)/\simeq_S 
$$
which is a bijection provided  $(\C, S)$ is a discrete quasi-schemoid. In particular, the homomorphism of groups 
$
\widetilde{U} : h\text{aut}(K(\C)) \longrightarrow h\text{aut}(\C)
$
is an isomorphism. Moreover, 
the composition of morphisms in $q\mathsf{ASmd}$ gives rise to a left  $h\text{aut}((\D, S))$-set structure and 
a right  $h\text{aut}((\C, S))$-set structure on the homotopy set $[(\C, S), (\D, S')]$. This follows from Proposition 
\ref{prop:equivalence_relations}. 

Let $B: \mathsf{Cat} \to \mathsf{Top}$ be the functor which sends a small category to its classifying space. 
A natural transformation between functors $F$ and $G$ induces a homotopy between $BF$ and $BG$. 
This enables us to conclude that  
$B\circ U$ induces a group homomorphism 
$$
\rho : h\text{aut}((\C, S)) \longrightarrow \E(B\C), 
$$
where $\E(X)$ denotes the homotopy set of self-homotopy equivalences on a space $X$. 

We here give an example of a contractible quasi-schemoid. 
Let $\C$ be a small category in which $\sigma:=\{\phi_{ij} : i \to j\}_{i, j \in ob(\C)}$ is the set of non-identity morphisms and 
the composite is given by  
$\phi_{jk}\circ\phi_{ij}= \phi_{ik}$.   Let ${\bf 1}$ be the set of all identity maps in $\C$. 
Then it follows that $(\C, S=\{\sigma, {\bf 1}\})$ is a quasi-schemoid. In fact, it is readily seen that   
$p_{{\bf 1} \sigma}^\sigma=1$, $p_{\sigma {\bf 1}}^\sigma=1$, $p_{{\bf 1} {\bf 1}}^\sigma=0$, 
$p_{{\bf 1} {\bf 1}}^{\bf 1}=1$, $p_{{\bf 1} \sigma}^{\bf 1}=0$, 
$p_{\sigma {\bf 1}}^{\bf 1}=0$ and $p_{\sigma \sigma}^{\bf 1}=0$. Moreover, 
we see that the map $\theta : (\pi_{\sigma\sigma}^\sigma)^{-1}(\phi_{ij}) \to ob(\C)$ 
defined by $\theta((\phi_{kj}, \phi_{ik}))=k$ is bijective. 

Let $\bullet$ be the trivial category; that is, it consists of one object $\bullet$ and the identity. 
We call the quasi-schemoid $K(\bullet)$ the trivial schemoid. 

\begin{prop}
The schemoid $(\C, S=\{\sigma, {\bf 1}\})$ mentioned above is contractible; that is, 
it is homotopy equivalent to the trivial schemoid.  
\end{prop}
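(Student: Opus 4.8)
The plan is to show that the unique functor $F:(\C,S)\to K(\bullet)$ is a homotopy equivalence. Fix an object $i_0\in ob(\C)$ and let $G:K(\bullet)\to(\C,S)$ be the morphism with $G(\bullet)=i_0$. Both $F$ and $G$ are morphisms of quasi-schemoids: $F$ collapses every class of $S$ into the unique class $\{1_\bullet\}$ of $K(\bullet)$, and $G$ sends $\{1_\bullet\}$ into the class ${\bf 1}$. Since $FG=1_{K(\bullet)}$ on the nose, the whole statement reduces to constructing a homotopy from $1_{(\C,S)}$ to the constant morphism $GF$ at $i_0$.

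First I would write down the candidate homotopy $H:(\C,S)\times I\to(\C,S)$, where $(\C,S)\times I$ is a quasi-schemoid by the discussion in Section 2. Set $\psi_m:=\phi_{mi_0}$ for $m\neq i_0$ and $\psi_{i_0}:=\phi_{i_0i_0}$, and define $H$ on objects by $H(a,0)=a$ and $H(a,1)=i_0$, and on morphisms by $H(f,1_0)=f$, $H(f,1_1)=1_{i_0}$, and $H(h,u)=\psi_{s(h)}$ for every morphism $h$ of $\C$. Checking that $H$ is a functor comes down to the compositions involving a diagonal arrow $(h,u)$: the relation $H(g,1_1)\circ H(h,u)=H(gh,u)$ holds because $1_{i_0}\circ\psi_{s(h)}=\psi_{s(h)}=\psi_{s(gh)}$, and $H(h,u)\circ H(g,1_0)=H(hg,u)$ holds because $\psi_{s(h)}\circ g$ is the unique morphism $s(g)\to i_0$ when $s(g)\neq i_0$, and equals $\phi_{i_0i_0}=\psi_{i_0}$ when $s(g)=i_0$, using the relation $\phi_{jk}\circ\phi_{ij}=\phi_{ik}$; the remaining compositions and the preservation of identities are immediate. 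By construction $H\circ\e_0=1_{(\C,S)}$ and $H\circ\e_1=GF$.

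The crux is to verify that $H$ is a morphism of quasi-schemoids, i.e. that $H$ carries each of the six classes $\sigma\times\{1_0\}$, $\sigma\times\{1_1\}$, $\sigma\times\{u\}$, ${\bf 1}\times\{1_0\}$, ${\bf 1}\times\{1_1\}$, ${\bf 1}\times\{u\}$ of $(\C,S)\times I$ into a single class of $S$. The four classes built from $1_0$ and $1_1$ land in $\sigma$ or ${\bf 1}$ by inspection. The two remaining classes $\sigma\times\{u\}$ and ${\bf 1}\times\{u\}$ both have image $\{\psi_m\mid m\in ob(\C)\}=\{\phi_{mi_0}\mid m\neq i_0\}\cup\{\phi_{i_0i_0}\}$, which is contained in $\sigma$. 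This is exactly where the specific composition law of $\C$, and in particular the presence of the non-identity endomorphisms $\phi_{i_0i_0}$, is used, and I expect it to be the only subtle step: the more obvious choice $\psi_{i_0}=1_{i_0}$, arising from a strict natural transformation $1_\C\Rightarrow GF$ in $\mathsf{Cat}$, would put ${\bf 1}\times\{u\}$ into both $\sigma$ and ${\bf 1}$ and so would fail to be a morphism of quasi-schemoids. Granting the verification, $H$ yields $1_{(\C,S)}\sim GF$, hence $GF\simeq 1_{(\C,S)}$, while $FG=1_{K(\bullet)}$; thus $F$ is a homotopy equivalence and $(\C,S)$ is contractible.
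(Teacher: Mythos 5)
Your proof is correct and follows essentially the same route as the paper: a section $G$ at a chosen object $i_0$, the identity $FG=1_{K(\bullet)}$, and the single homotopy $H$ with $H(h,u)=\phi_{s(h)\,i_0}$ (so that $H(1_{i_0},u)=\phi_{i_0i_0}\in\sigma$), which is exactly the square the paper writes down with $i_0=0$. You merely make explicit the functoriality and partition-preservation checks that the paper summarizes in the remark that all $\phi_{k0}$ lie in $\sigma$.
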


\begin{proof}
Let $0$ be an object of $\C$. We define a morphism $s : K(\bullet) \to (\C, S)$ in $q\mathsf{ASmd}$ 
by $s(\bullet) = 0$. Let  $p:  (\C, S) \to K(\bullet)$ be the trivial morphism. 
We define a homotopy $H : (\C, S) \times I \to (\C, S)$ by 
$$
\xymatrix@C25pt@R20pt{
{k}  \ar@{->}[r]^{\phi_{k0}}  \ar[dr]^{\phi_{k0}} \ar@{->}[d]_{\phi_{kl}} &  {0}  \ar@{->}[d]^{id_0}\\
{l} \ar@{->}[r]_{\phi_{l0}} &  {0} }
$$
for any $\phi_{kl}$. Observe that $\phi_{k0}$ and $\phi_{l0}$ are in $\sigma$ for any $k$ and $l$. 
Thus we see that $1_\C \sim sp$.  We have the result.
\end{proof}

The following proposition gives a sufficient condition for 
a quasi-schemoid $(\C, S)$ not to be contractible.

\begin{prop}\label{prop:mono1}
Let $F: (\C, S) \to (\C, S)$ be a morphism of quasi-schemoids 
which is homotopic to the identity functor.
Suppose that  ${\bf 1}=\{1_x\}_{x\in ob(\C)}$ is a subset of an element in the partition $S$ 
and that $F(f)$ is an identity for some non-identity element $f \in mor(\C)$.   
Then there exist elements $\sigma$ and $\tau$ such that $\tau$ contains a non-identity element and 
$p_{\sigma\tau}^\sigma\neq 0$ or 
$p_{\tau\sigma}^\sigma\neq 0$.  
\end{prop}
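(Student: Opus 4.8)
The plan is to pass from the homotopy $F\simeq 1_\C$ to a single elementary homotopy step, extract a composition identity from the commutative square attached to $f$, and read off from it a nonvanishing structure constant. Let $\sigma_f\in S$ be the block containing the distinguished non-identity morphism $f$, and choose a finite chain $1_\C=F_0\sim F_1\sim\cdots\sim F_n=F$ realizing $F\simeq 1_\C$. Since $F_0(f)=f$ is not an identity while $F_n(f)=F(f)$ is, the index $k:=\max\{\,j:F_j(f)\text{ is not an identity}\,\}$ lies in $\{0,\dots,n-1\}$, so $F_k(f)$ is not an identity but $F_{k+1}(f)$ is. Fix a homotopy $H$ realizing $F_k\sim F_{k+1}$. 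There are two symmetric cases, $H:F_k\Rightarrow F_{k+1}$ and $H:F_{k+1}\Rightarrow F_k$; I would carry out the first in detail and only indicate the second.

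Writing $f:i\to j$, the commutative square of Remark \ref{rem:square} attached to $f$ has horizontal edges $H(1_i,u)$ and $H(1_j,u)$, left and right edges $F_k(f)$ and $F_{k+1}(f)$, and diagonal $H(f,u)$ with
$$
H(1_j,u)\circ F_k(f)=H(f,u)=F_{k+1}(f)\circ H(1_i,u).
$$
Because $F_{k+1}(f)$ is an identity, we have $F_{k+1}(i)=F_{k+1}(j)$ and $F_{k+1}(f)\circ H(1_i,u)=H(1_i,u)$, and the square collapses to the relation $H(1_j,u)\circ F_k(f)=H(1_i,u)$ in $\C$; this is a genuine composition, since $s(H(1_j,u))=F_k(j)=t(F_k(f))$, the coincidence $F_{k+1}(i)=F_{k+1}(j)$ being exactly what makes the sources and targets line up.

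Now let $\beta\in S$ be the common block of $H(1_i,u)$ and $H(1_j,u)$: these lie in a common block because $1_i$ and $1_j$ lie in the same element of $S$ (both belong to the element containing ${\bf 1}$), so $(1_i,u)$ and $(1_j,u)$ lie in a common block of $(\C,S)\times I$, which $H$, being a morphism of quasi-schemoids, carries into the single block $\beta$; this is precisely the use of the hypothesis, as observed in Remark \ref{rem:square}. Let $\delta\in S$ be the block with $F_k(\sigma_f)\subseteq\delta$; since $F_k(f)\in\delta$ is a non-identity by the choice of $k$, the block $\delta$ contains a non-identity element. Setting $\sigma:=\beta$ and $\tau:=\delta$, the relation above exhibits $(H(1_j,u),F_k(f))$ as an element of $(\pi^{\beta}_{\beta\delta})^{-1}(H(1_i,u))$, so $p^{\sigma}_{\sigma\tau}=p^{\beta}_{\beta\delta}\neq 0$. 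In the dual case $H:F_{k+1}\Rightarrow F_k$ the same reasoning yields $F_k(f)\circ H(1_i,u)=H(1_j,u)$, hence $p^{\sigma}_{\tau\sigma}\neq 0$.

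Once set up this way the argument is mostly bookkeeping, and two points require care. First, one must verify that the pair $(H(1_j,u),F_k(f))$ is genuinely composable with composite in $\beta$; this rests on the identity edge $F_{k+1}(f)$ forcing two objects of $\C$ to coincide, which is where the hypothesis that $F$ sends some non-identity morphism to an identity --- transported to the step $k$ of the chain --- is used. Second, one must ensure that the block in the role of $\tau$ is the one containing a non-identity, which is why $k$ is chosen as the last index at which $f$ has not yet been collapsed to an identity; and the hypothesis that ${\bf 1}$ lies in a single element of $S$ is indispensable for concluding that $H(1_i,u)$ and $H(1_j,u)$ share a block, without which the argument breaks down entirely.
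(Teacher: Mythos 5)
Your proof is correct and follows essentially the same route as the paper: locate two consecutive functors in the chain joining $F$ and $1_\C$ where $f$ changes from non-identity image to identity image, use the hypothesis on ${\bf 1}$ (via Remark \ref{rem:square}) to put the two horizontal arrows of the elementary homotopy square in a common block $\sigma$, let the identity edge collapse the square to a composition, and read off $p_{\sigma\tau}^\sigma\neq 0$ or $p_{\tau\sigma}^\sigma\neq 0$ with $\tau$ the block of the non-identity image of $f$. The only difference is that you traverse the chain from $1_\C$ to $F$ while the paper goes from $F$ to $1_\C$, which is immaterial.
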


\begin{proof} By assumption, we have a sequence of morphisms 
$F = F_0\sim F_1 \sim \cdots \sim F_{n-1}\sim F_n= 1_\C$. Since $F(f)$ is an identity but not $f$, 
there exists a number $l$ such that 
$F_l(f)$ is an identity and $F_{l+1}(f)$ is not an identity. Then the homotopy $H$ which induces 
the relation $F_i \sim F_{i+1}$ gives rise to a commutative diagram

\ \ \ \ \ \ $
\xymatrix@C25pt@R20pt{
{sF_l(f)}  \ar@{->}[r]^{\phi} \ar@{->}[d]_{F_l(f)=1} &  {sF_{l+1}(f)}  \ar@{->}[d]^{F_{l+1}(f)}     \\
{tF_l(f)} \ar@{->}[r]_{\phi'} &  {tF_{l+1}(f)} 
}
$
or \ 
$
\xymatrix@C25pt@R20pt{
{sF_l(f)}   \ar@{->}[d]_{F_l(f)=1} &  {sF_{l+1}(f)} \ar@{->}[l]_{\phi} \ar@{->}[d]^{F_{l+1}(f)}     \\
{tF_l(f)} &  {tF_{l+1}(f)}. \ar@{->}[l]^{\phi'} 
}
$

\noindent
Since ${\bf 1}$ is a subset of an element in $S$, it follows that $\phi$ and $\phi'$ are in the same element $\sigma$ in the partition $S$; 
see Remark \ref{rem:square}. 
We choose an element $\tau$ in $S$ which contains the morphism $F_{l+1}(f)$. It turns out that 
$p_{\sigma\tau}^\sigma\neq 0$ or 
$p_{\tau\sigma}^\sigma\neq 0$.  
\end{proof}

\begin{rem}
Let us consider a quasi-schemoid $(\C, S)$ whose underlying category $\C$ is defined by the diagram 
$$
\xymatrix@C35pt@R8pt{
 & a \ar[rd]^{\beta} & \\ 
x \ar[rr]^{\varepsilon} \ar[ru]^{\alpha} \ar[rd]_{\gamma} & & y & \text{with} \ \  \beta \alpha = \varepsilon 
= \delta\gamma  \\
 & b  \ar[ru]_{\delta} & 
}
$$
and whose partition $S=\{\sigma_1, \sigma_2, \sigma_3, {\bf 1}\}$ of $mor(\C)$ is given by 
$\sigma_1=\{\alpha, \gamma\}$, 
$\sigma_2 =\{\beta, \delta\}$, $\sigma_3=\{\varepsilon\}$ and ${\bf 1}=\{1_x, 1_y, 1_{a}, 1_{b}\}$.  
A direct computation enables us to deduce that $p_{\sigma\tau}^\sigma = 0$ and  
$p_{\tau\sigma}^\sigma = 0$ for $\sigma, \tau \in S$ if $\tau \neq {\bf 1}$.  
Then Proposition \ref{prop:mono1} implies that the quasi-schemoid $(\C, S)$ is not contractible in $q\mathsf{ASmd}$.
We observe that the underlying category $U(\C, S)=\C$ 
is contractible in $\mathsf{Cat}$ because $\C$ has an initial (terminal) object; see \cite[(3.7) Proposition]{L-T-W}. 
\end{rem}

We conclude this section after describing a $2$-category structure on $q\mathsf{ASmd}$. 

Let $I_m$ be a discrete quasi-schemoid of the form $K([m])$. For morphisms $F$ and $G$ from $(\C, S)$ to $(\D, S')$, if 
there exists a non-negative integer $m$ and a morphism $\phi : (\C, S) \times I_m \to (\D, S')$ such that 
$\phi\circ \e_0 = F$ and $\phi\circ \e_m = G$, then we write $\phi : F \Rightarrow_m G$ or 
$
\xymatrix@C+1.5pc{
(\C, S) \rtwocell^{F}_{G}{\hspace*{-0.5cm}m \;\;\;\;\  \phi} & (\D, S')
}
$
when emphasizing the source and target of the functors.  We call such a morphism $\phi$ a {\it homotopy} from 
$F$ to $G$. Observe that there exists a homotopy $\phi : F \Rightarrow_m G$ if and only if 
$\phi_0 : F \Rightarrow F_1$, $\phi_{1} : F_{1} \Rightarrow F_2$, ..., $\phi_{m-1} : F_{m-1} \Rightarrow G$ 
for some functors $F_i$ and morphisms $\phi_j$; 
see Definition \ref{defn:Homotopy}. Then we identify $\phi$ with the composite $\phi_{m-1} \circ \cdots \circ \phi_0$.


\begin{thm}\label{thm:2-cat} 
The category $q\mathsf{ASmd}$ of quasi-schemoids admits a $2$-category structure whose $2$-morphisms are homotopies mentioned above and under which the fully faithful embedding $K : \mathsf{Cat} \to q\mathsf{ASmd}$ is a functor of $2$-categories. 
\end{thm}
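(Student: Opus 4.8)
The plan is to verify the axioms of a $2$-category directly, using homotopies $\phi : F \Rightarrow_m G$ as $2$-morphisms, and then to check compatibility with the embedding $K$. First I would fix the data: for quasi-schemoids $(\C,S)$ and $(\D,S')$ the $0$-cells are quasi-schemoids, the $1$-cells are morphisms of quasi-schemoids, and the $2$-cells from $F$ to $G$ are homotopies $\phi : (\C,S)\times I_m \to (\D,S')$ with $\phi\circ\e_0 = F$, $\phi\circ\e_m = G$, for some $m \ge 0$; the case $m=0$ gives the identity $2$-cell on $F$ (here $I_0 = K([0])$ is the trivial schemoid and $\e_0=\e_0$ collapses the factor). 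The key reduction, already recorded in the excerpt, is that a homotopy $\phi : F \Rightarrow_m G$ is the same as a finite chain $\phi_0 : F \Rightarrow F_1,\ \phi_1 : F_1 \Rightarrow F_2,\ \dots,\ \phi_{m-1} : F_{m-1}\Rightarrow G$ of ``length-one'' homotopies, so that $\phi$ may be identified with the formal composite $\phi_{m-1}\circ\cdots\circ\phi_0$. This lets me define vertical composition of $2$-cells simply by concatenation of chains: given $\phi : F \Rightarrow_m G$ and $\psi : G \Rightarrow_n H$, set $\psi\bullet\phi := \psi\circ\phi : F \Rightarrow_{m+n} H$, realized on $(\C,S)\times I_{m+n}$ by gluing the cylinders along the common face. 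Vertical composition is then associative and unital essentially by definition, since concatenation of finite sequences is.

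Next I would define horizontal composition. Given $2$-cells $\phi : F \Rightarrow_m G$ between $(\C,S)$ and $(\D,S')$ and $\psi : F' \Rightarrow_n G'$ between $(\D,S')$ and $(\E,S'')$, I want a $2$-cell $\psi * \phi : F'F \Rightarrow F'G'$... more precisely $F'F \Rightarrow G'G$. The standard recipe is whiskering: first whisker $\phi$ on the left by $F'$ to get $F'\phi : F'F \Rightarrow_m F'G$ — this is the composite $(\C,S)\times I_m \xrightarrow{\phi} (\D,S') \xrightarrow{F'} (\E,S'')$, which is again a morphism of quasi-schemoids because $F'$ is — and whisker $\psi$ on the right by $G$ to get $\psi G : F'G \Rightarrow_n G'G$, namely $(\C,S)\times I_n \xrightarrow{G\times \mathrm{id}} (\D,S')\times I_n \xrightarrow{\psi} (\E,S'')$; then set $\psi * \phi := (\psi G)\bullet(F'\phi)$. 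One then checks that whiskering on the other side, $(G'\phi)\bullet(\psi F)$, yields the same $2$-cell up to the identification of chains, which is the interchange law; here it suffices to treat length-one homotopies, where it amounts to the commuting of two square-diagrams, i.e. to the fact that applying $\psi$ to the square defining $\phi$ (as in Remark~\ref{rem:square}) and applying the square defining $\psi$ to $\phi$'s output produce the same composite in $\E$, and that the partition conditions are respected because morphisms of quasi-schemoids carry elements of a partition into elements of a partition. Associativity and unitality of horizontal composition, and the compatibility of identity $2$-cells, follow from the corresponding properties of composition of functors plus the $m=0$ conventions.

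Finally I would check that $K : \mathsf{Cat}\to q\mathsf{ASmd}$ is a $2$-functor. On $0$- and $1$-cells this is the fully faithful embedding already established in \cite{K-M}. On $2$-cells, a $2$-morphism in $\mathsf{Cat}$ from $F$ to $G$ is a natural transformation $\eta$, equivalently a functor $\C\times[1]\to\D$ restricting to $F$ and $G$; applying $K$ and using $K(\C\times[1]) = K(\C)\times I$ (compatibility of $K$ with products, as in the proof of Proposition~\ref{prop:homotopy}) yields a homotopy $K(\C)\times I \to K(\D)$, i.e. a $2$-cell $K(F)\Rightarrow_1 K(G)$; iterating over a chain handles strong-homotopy $2$-cells of higher length. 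One then verifies that $K$ respects vertical and horizontal composition of $2$-cells, which again reduces, via $K(\C\times[m]) = K(\C)\times I_m$, to the fact that $K$ preserves products and composition of functors.

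I expect the main obstacle to be the interchange law, and more specifically the bookkeeping needed to show that the two whiskered composites agree \emph{as homotopies of quasi-schemoids}, not merely on underlying categories. The subtlety flagged in Remark~\ref{rem:square} — that a homotopy must send a morphism lying in a given element $\sigma\in S$ to morphisms lying in a single element of $S'$, coherently across all the relevant squares — must be preserved under whiskering; since whiskering is composition with a fixed morphism of quasi-schemoids on one side and with $G\times\mathrm{id}$ on the other, and both $F'$ and $G$ carry partition-elements into partition-elements, this propagates, but verifying it cleanly requires care. Everything else is a routine, if somewhat lengthy, diagram chase modeled on the known $2$-category structure of $\mathsf{Cat}$ with strong homotopy.
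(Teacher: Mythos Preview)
Your proposal is correct and follows essentially the same route as the paper: vertical composition by concatenation of homotopies, horizontal composition by whiskering, and the crux being that whiskering by a morphism of quasi-schemoids preserves the partition condition so that the resulting homotopies are well defined in $q\mathsf{ASmd}$. The only minor difference is emphasis: the paper disposes of the interchange law in one line by inheriting it from the underlying natural transformations in $\mathsf{Cat}$ and then isolates well-definedness of $\nu F_2$ and $G_1\kappa$ as the sole point requiring argument, whereas you frame interchange itself as the main obstacle; in practice your partition-preservation check for whiskering is exactly what the paper does, and your additional discussion of why $K$ is a $2$-functor (via $K(\C\times[m])\cong K(\C)\times I_m$) is a welcome detail the paper leaves implicit.
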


\begin{proof} Let $(\C, S)$ and $(\D, S')$ be quasi-schemoids. 
We then see that the hom-set  
$${\mathcal A}((\C, S), (\D, S')):=\text{Hom}_{q\mathsf{ASmd}}((\C, S), (\D, S'))$$ is a category whose objects are morphisms from $(\C, S)$ to $(\D, S')$ in $q\mathsf{ASmd}$ and morphisms are homotopies 
between them.  
Observe that the composite $\psi\circ \phi : F \Rightarrow_{m+n} L$ of  two homotopies $\phi : F \Rightarrow_m G$ and 
$\psi : G \Rightarrow_n L$ is the vertical composite of natural transformations. Moreover,  the interchange law in $\mathsf{Cat}$ enables us to deduce that the horizontal composition of the homotopies 
\begin{center}
$\xymatrix@C+1.5pc{
(\C, S) \rtwocell^{F_1}_{F_2}{\hspace*{-0.5cm}m \;\;\;\;\  \kappa} & (\D, S')
}$  \ \ and \ \  
$\xymatrix@C+1.5pc{
(\D, S') \rtwocell^{G_1}_{G_2}{\hspace*{-0.5cm}n \;\;\;\;\  \nu} & (\E, S'')
}$  
\end{center}
gives rise to a functor 
$
\ast : {\mathcal A}((\D, S'), (\E, S'')) \times {\mathcal A}((\C, S), (\D, S')) \to {\mathcal A}((\C, S), (\E, S'')).
$
In fact, the composite $\nu \ast \kappa$ is defined to be the vertical composite $(\nu F_2)\circ (G_1\kappa)$ 
of natural transformations, which coincides with the vertical composite $(G_2\kappa)\circ (\nu F_1)$. 

To prove the theorem, it suffices to show the well-definedness of the horizontal composition. 
Suppose that $\nu : G_1  \Rightarrow_1 G_2$ is a homotopy in the sense of Definition  \ref{defn:Homotopy}. 
Since $F_2$ preserves the partition, it follows from Remark  \ref{rem:square} that 
$\nu F_2 : G_1F_2 \Rightarrow G_2F_2$ is a well-defined homotopy in $q\mathsf{ASmd}$.  
Thus for any $\nu : G_1  \Rightarrow_n G_2$, in general, 
$\nu F_2$ is the composite of homotopies in the sense of Definition  \ref{defn:Homotopy}. 
The same argument yields that $G_1\kappa$ is the composite of homotopies and hence so is 
$\nu \ast \kappa$. It turns out that $\ast$ is well defined. 
\end{proof}

\section{Rigidity of homotopy for trivial association schemes and groupoids}

We first investigate the structure of the group of self-homotopy equivalences 
on a trivial association scheme.  

\begin{lem}\label{lem:iso}
Let $(X, S)$ 
be an association scheme with the trivial partition $S=\{{\bf 1}, \sigma\}$. 
Then every self-homotopy equivalence on $\jmath(X, S)$ is an isomorphism.
\end{lem}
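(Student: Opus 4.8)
The plan is to analyze what a self-homotopy equivalence $F : \jmath(X,S) \to \jmath(X,S)$ can do on objects, and to show that the homotopy condition forces $F$ to be a bijection on objects, whence $F$ is an isomorphism of quasi-schemoids. First I would recall that in $\jmath(X,S)$ the underlying category $\C$ has object set $X$ and exactly one morphism $(x,y) : y \to x$ between any ordered pair, so a functor $F : \C \to \C$ is completely determined by the set-map $F_0 : X \to X$ it induces on objects, via $F((x,y)) = (F_0(x), F_0(y))$; moreover such a functor automatically respects the partition $S = \{\mathbf{1}, \sigma\}$ because $F$ sends identities to identities and non-identities to non-identities (as $F_0$ need not be injective a priori, this last point needs a remark — but if $F_0(x) = F_0(y)$ for $x \neq y$ then $F((x,y))$ is an identity, so $F$ does not preserve the partition; hence \emph{any} morphism of quasi-schemoids $\jmath(X,S) \to \jmath(X,S)$ already has $F_0$ injective, hence bijective since $X$ is finite, hence is an isomorphism).

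So the real content is to show that the hypothesis ``finite'' — or rather that $X$ is a finite set, which it is by definition of an association scheme — is what makes everything work, and to handle the case distinction cleanly. Concretely: let $F$ be any self-homotopy equivalence. If $F_0$ is not injective, pick $x \neq y$ with $F_0(x) = F_0(y)$; then $F((x,y))$ is an identity morphism while $(x,y)$ is not, so $F(\sigma)$ meets both $\mathbf{1}$ and, since $X$ has at least two elements and $F_0$ is a map of a finite set, $F(\sigma)$ also meets $\sigma$ unless $F_0$ is constant. In either case $F(\sigma) \not\subset \mathbf{1}$ and $F(\sigma) \not\subset \sigma$, contradicting that $F$ is a morphism of quasi-schemoids in the sense of Definition \ref{defn:morphisms}. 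Therefore $F_0$ is injective; since $X$ is finite, $F_0$ is bijective, so $F$ is an isomorphism in $q\mathsf{ASmd}$ — and a fortiori a homotopy equivalence, with honest inverse $F^{-1}$.

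An alternative route, which may be the intended one and which I would mention, is to invoke Proposition \ref{prop:mono1}: for the trivial association scheme $(X,S)$ with $S = \{\mathbf{1},\sigma\}$ one checks directly that $\mathbf{1}$ is an element of the partition, and that $p^{\sigma}_{\sigma\tau} = p^{\sigma}_{\tau\sigma} = 0$ whenever $\tau = \sigma$ is the non-identity class — indeed $\phi_{jk}\circ\phi_{ij} = \phi_{ik}$ shows composing two non-identities gives a non-identity only when the endpoints differ, and one must verify the numerics force the structure constants with a non-identity in the lower slot to vanish; here the association-scheme axioms (regularity of $\sigma$) enter. Granting this, Proposition \ref{prop:mono1} says no morphism homotopic to the identity can send a non-identity to an identity; combined with the fact (from Proposition \ref{prop:homotopy}) that $U(F)$ is then a strong self-homotopy equivalence of $\C$, and that $\C$ has the property that strong self-equivalences are determined on objects, one again concludes $F_0$ is bijective.

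The main obstacle I anticipate is the bookkeeping in ruling out non-injective $F_0$: one must be careful that ``$F$ is a morphism of quasi-schemoids'' genuinely fails, i.e. that $F(\sigma)$ cannot be forced entirely into $\mathbf{1}$ (which would happen only if $|X| = 1$, a trivial case) nor entirely into $\sigma$ (which fails precisely because some non-identity gets collapsed). A secondary, more delicate point — if one takes the Proposition \ref{prop:mono1} route — is justifying the vanishing of the relevant structure constants $p^{\sigma}_{\sigma\sigma}$-type quantities, which requires using the association-scheme regularity axioms rather than just the schemoid condition; this is where I would spend the most care, since for a general quasi-schemoid the conclusion of the lemma could fail.
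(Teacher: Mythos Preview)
Your main argument has a genuine gap: the claim that \emph{any} morphism of quasi-schemoids $\jmath(X,S) \to \jmath(X,S)$ must be injective on objects is false. A constant map $F_0 \equiv x_0$ gives a perfectly good morphism of quasi-schemoids, since then $F(\sigma) \subset \mathbf{1}$ and $F(\mathbf{1}) \subset \mathbf{1}$; the definition of morphism only requires $F(\sigma) \subset \tau$ for \emph{some} $\tau$, and $\tau = \mathbf{1}$ is allowed. You acknowledge the constant case but then write ``In either case $F(\sigma) \not\subset \mathbf{1}$,'' which is simply wrong when $F_0$ is constant, and later ``which would happen only if $|X|=1$,'' which is also wrong. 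So the hypothesis that $F$ is a \emph{homotopy equivalence} is essential and you never use it: your argument, as written, would prove that every self-morphism is an isomorphism, which is false.

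Your alternative route via Proposition~\ref{prop:mono1} also does not go through. For the trivial scheme one computes $p_{\sigma\sigma}^{\sigma} = \sharp X - 2$, which is nonzero as soon as $\sharp X \geq 3$; taking $\tau = \sigma$ thus produces a non-identity class with $p_{\sigma\tau}^{\sigma} \neq 0$, so the contrapositive of Proposition~\ref{prop:mono1} yields no conclusion here. The paper's actual argument is different in kind: starting from a chain $GF \sim F_1 \sim \cdots \sim 1_{\C}$ it locates an index $l$ where $F_l$ is non-injective but $F_{l+1}$ is bijective, shows that non-injectivity forces $F_l$ to be constant (this uses your observation that $F_l(\sigma)$ must land in a single cell), and then analyzes the single homotopy square between $F_l$ and $F_{l+1}$ to reach a contradiction by comparing the two horizontal arrows $H(1_{i'},u)$ and $H(1_{j'},u)$, which must lie in the same cell since $1_{i'}, 1_{j'} \in \mathbf{1}$. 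The key idea you are missing is precisely this use of the homotopy to bridge a constant $F_l$ to a bijective $F_{l+1}$.
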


\begin{proof} The assertion is trivial if $\sharp X =1$. Assume that $\sharp X \geq 2$. 
Let $F$ be a self-homotopy equivalence on $\jmath(X, S)$. 
We have a sequence of morphisms 
$GF \sim F_1 \sim \cdots \sim F_{n}\sim  1_\C$, where $G$ is a homotopy inverse of $F$. 
Then there exists an integer $l$ such that $F_{l+1}$ is injective and hence bijective on $X$ but not $F_l$. 
Suppose that $F_l(i)=x=F_l(j)$ for some distinct elements $i$ and $j$ of $X$. Since $F_l(\phi_{ij})= 1_x$ and $F_l$ is a  
morphism of schemoids, it follows that $F_l(f) = 1_x$ for any $f \in mor(\jmath(X, S))$. 
In fact, we see that $F_l(\phi_{ij}\circ \phi_{t(f)i}\circ f)= F_l(\phi_{ij})\circ F_l(\phi_{t(f)i})\circ F_l(f) = 1_x\circ 1_z \circ 1_y$ for some 
$z$ and $y$ in $X$. Then $x= z= x$.  

Let $H$ be a homotopy between 
$F_l$ and $F_{l+1}$, say $H: F_l  \Rightarrow F_{l+1}$. We choose an object $j'$ with $F_{l+1}(j')=x$.    
Then for a map $f : i' \to j'$ which is not the identity,  the homotopy $H$ gives a commutative diagram 
$$
\xymatrix@C40pt@R15pt{
{x}  \ar@{->}[r]^{\phi_{xF_{l+1}(i')}} \ar@{->}[d]_{1_x} &  {F_{l+1}(i')}  \ar@{->}[d]^{F_{l+1}(f)} \\
{x} \ar@{->}[r]_{\phi_{xx}=1_x} &  x. }
$$
We see that $\phi_{xF_{l+1}(i')}$ is in ${\bf 1} \in S$ and hence $F_{l+1}(i')=x$, which is a contradiction. 
This completes the proof. 
\end{proof}

\begin{rem}
An association scheme with the trivial partition is not contractible in general. 
\end{rem}

\begin{lem}\label{lem:H}
Let $(X, S)$ be an association scheme with the trivial partition $S=\{{\bf 1}, \sigma\}$ and $F, G: \jmath(X, S) \to \jmath(X, S)$ 
self-homotopy equivalences.  Suppose that $\sharp X \geq 3$ and $F \sim G$. Then $F=G$. 
\end{lem}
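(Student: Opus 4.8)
The plan is to analyze a single elementary homotopy $H : F \Rightarrow G$ (or $H : G \Rightarrow F$) and show it must be constant, using the hypothesis $\sharp X \geq 3$ together with the structure of the association scheme with trivial partition. By Lemma \ref{lem:iso}, both $F$ and $G$ are isomorphisms of $\jmath(X,S)$; in particular they are bijections on $X$ and send non-identity morphisms to non-identity morphisms. It therefore suffices to treat one step of the homotopy relation, say $H : F \Rightarrow G$, and prove $F = G$; the general case $F \simeq G$ would then follow by a short induction along the defining sequence, but in fact for this lemma we only need the one-step statement since the claim is about a single $\sim$.

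First I would write out the commutative square that $H$ produces for an arbitrary non-identity morphism $f : i \to j$ in $\jmath(X,S)$, as in Remark \ref{rem:square}:
$$
\xymatrix@C35pt@R20pt{
{F(i)}  \ar@{->}[r]^{H(1_i, u)} \ar[dr]^{H(f,u)}   \ar@{->}[d]_{F(f)}  &  {G(i)}  \ar@{->}[d]^{G(f)}\\
{F(j)} \ar@{->}[r]_{H(1_j, u)}  &  {G(j)}.
}
$$
Because $S = \{{\bf 1}, \sigma\}$ and ${\bf 1}$ is a single block of the partition, all the identities $1_i, 1_j$ lie in the same block, so $H(1_i, u)$ and $H(1_j, u)$ lie in the same element of $S$. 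The key dichotomy is then: either the morphisms $H(1_x, u)$ are all in ${\bf 1}$ (i.e. all identities) — in which case the square forces $F(i) = G(i)$ for every object $i$ arising as a source and $F(f) = G(f)$ for every such $f$, giving $F = G$ since $\sharp X \geq 2$ guarantees every object is a source of a non-identity morphism — or else $H(1_x, u) \in \sigma$ for all $x$.

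The main obstacle, and the place where $\sharp X \geq 3$ enters, is ruling out the second case. Suppose $H(1_x, u) = \phi_{x, h(x)}$ for some function $h : X \to X$ with $h(x) \neq x$ for all $x$; commutativity of the square for $f = \phi_{ij}$ reads $G(\phi_{ij}) \circ \phi_{i, h(i)} = \phi_{j, h(j)} \circ F(\phi_{ij})$, i.e. $\phi_{h(i), \, G(j)} \circ \phi_{i,h(i)}$ must equal $\phi_{h(j), G(j)} \circ \phi_{F(i), h(j)}$ after matching sources and targets; tracking sources and targets forces $F(i) = i$, $F(j) = G(j) = h(j)$, and $h(i) = G(i)$. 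Applying this with $j$ varying over the (at least two) objects $\neq i$ yields $F = \mathrm{id}$ and $G = h$, while symmetry in the roles forces $F(i) = i$ for the chosen $i$ as well, so $F = \mathrm{id}_X$; but then a second instance of the square with a different base object produces a contradiction with $h(x) \neq x$, exactly as in the last paragraph of the proof of Lemma \ref{lem:iso} where one concludes $\phi_{xF_{l+1}(i')}$ must lie in ${\bf 1}$. The case $H : G \Rightarrow F$ is symmetric. I expect the bookkeeping of sources and targets in this second case — showing the only way to make all squares commute with $H(1_x,u) \in \sigma$ is impossible once $\sharp X \geq 3$ — to be the technical heart of the argument, and the two-element case $\sharp X = 2$ to be genuinely excluded (one can presumably exhibit a nontrivial such homotopy there, which is why the hypothesis is $\sharp X \geq 3$).
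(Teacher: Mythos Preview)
Your opening setup and the dichotomy on the horizontal arrows $H(1_x,u)$ are correct and match the paper: since all identities lie in the single block ${\bf 1}$, either $H(1_x,u)\in{\bf 1}$ for every $x$ (forcing $F(x)=G(x)$ everywhere, hence $F=G$) or $H(1_x,u)\in\sigma$ for every $x$ (forcing $F(x)\neq G(x)$ everywhere). The first case is fine.

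The second case, however, has a genuine gap. First, a notational slip: $H(1_x,u)$ is the unique morphism $F(x)\to G(x)$, namely $\phi_{F(x),G(x)}$, not $\phi_{x,h(x)}$; your source is $F(x)$, not $x$. More importantly, the constraints you try to extract from commutativity of the square do not exist: in $\jmath(X,S)$ there is a unique morphism between any two objects, so \emph{every} such square commutes automatically. Nothing about sources and targets forces $F(i)=i$ or anything of the sort. The claimed deductions ``$F(i)=i$, $F(j)=G(j)=h(j)$'' are artifacts of the mislabelled source.

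What you are missing is the partition constraint on the \emph{diagonals}. For every non-identity $\phi_{ij}$ one has $H(\phi_{ij},u)=\phi_{F(i),G(j)}$, and since all $\phi_{ij}$ with $i\neq j$ lie in the single block $\sigma$, all these diagonals must lie in a common element of $S$. This gives a second dichotomy: either $F(i)=G(j)$ for \emph{all} $i\neq j$, or $F(i)\neq G(j)$ for all $i\neq j$. In the first subcase, choose three distinct elements and get $G(j)=F(i)=G(j')$ with $j\neq j'$, contradicting that $G$ is a bijection (Lemma~\ref{lem:iso}); this is exactly where $\sharp X\geq 3$ is used. In the second subcase, combining with $F(x)\neq G(x)$ from the horizontal-arrow case gives $F(i)\neq G(j)$ for \emph{all} $i,j$, so the image of $G$ misses $F(0)$, contradicting surjectivity of $G$. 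This is the paper's argument; your proposal has the right skeleton but tries to finish with a tool (square commutativity) that carries no information here.
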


\begin{proof} In order to prove the lemma, 
it suffices to show that if there exists a homotopy $H: F \Rightarrow G$, then $F=G$. The homotopy gives rise to the commutative diagram 
$$
\xymatrix@C35pt@R20pt{
{F(i)}  \ar@{->}[r]^{\phi_{F(i)G(i)}} \ar[dr]^{\phi_{F(i)G(j)}}  \ar@{->}[d]_{F(\phi_{ij})} &  {G(i)}  \ar@{->}[d]^{G(\phi_{ij})}\\
{F(j)} \ar@{->}[r]_{\phi_{F(j)G(j)}} &  {G(j)}, }
$$
where $\phi_{ij} = (j, i) \in X\times X$.

Suppose that $F$ is different from $G$. 
Assume further that there exists an object $i$ such that $F(i)=G(i)$. Since $F\neq G$,  it follows that 
$F(j) \neq G(j)$ for some $j$. We see that 
$H(1_i, u) = \phi_{F(i)G(i)}=1_i \in {\bf 1}$ and $H(1_j, u) = \phi_{F(j)G(j)} \in mor(\C)\backslash {\bf 1}$, 
which is a contradiction; see Remark \ref{rem:square}. This implies that $F(j) \neq G(j)$ for any $j$. 

If there exists an element $(i, j)\notin {\bf 1}$ such that $F(i) = G(j)$, 
then $H(\phi_{ij}, u)=\phi_{F(i)G(j)}$ is in ${\bf 1}$ and hence so is $\phi_{F(k)G(l)}$ for any $(k, l)\notin {\bf 1}$. 
This yields that $F(k) = G(l)$ for any $(k, l)\notin {\bf 1}$. Since $\sharp X \geq 2$, 
it follows that $G(1)=F(0)=G(2)$, which is a contradiction. In fact, 
by Lemma \ref{lem:iso} the morphism $G$ is an isomorphism. In consequence, we see 
that $F(i) \neq G(j)$ for any $i$ and $j$ in $X$. Thus, $F(0) \neq G(i)$ for any $i$. The fact enables us to deduce that 
$G$ is not surjective, which is a contradiction. 
This completes the proof.
\end{proof}

\begin{thm}\label{thm:trivial_one}
Let $(X, S)$ be an association scheme with the trivial partition. Then the group 
$h\text{\em aut}(\jmath(X, S))$ is isomorphic to the permutation group of order $\sharp X$ if $\sharp X \geq 3$. 
If $\sharp X = 2$, then $h\text{\em aut}(\jmath(X, S))$ is trivial. 
\end{thm}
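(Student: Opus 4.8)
The plan is to reduce the computation of $h\text{aut}(\jmath(X,S))$ entirely to the underlying set $X$ and its permutations. First I would handle the case $\sharp X \geq 3$. By Lemma \ref{lem:iso} every self-homotopy equivalence $F$ on $\jmath(X,S)$ is an isomorphism of quasi-schemoids; an isomorphism of $\jmath(X,S)$ is determined by a bijection $\varphi : X \to X$ (since $\text{Hom}_\C(y,x)=\{(x,y)\}$ forces the morphism assignment once the object assignment is fixed, and any bijection automatically preserves the partition $S=\{{\bf 1},\sigma\}$). Thus the monoid $\text{aut}(\jmath(X,S))$ is exactly the symmetric group on $X$. Next, Lemma \ref{lem:H} tells us that for $\sharp X \geq 3$, two self-homotopy equivalences that are related by a single elementary homotopy ($F \sim G$) must in fact be equal; hence, chaining through a finite sequence $F = F_0 \sim F_1 \sim \cdots \sim F_n = G$ in Definition \ref{defn:Homotopy_relation}, $F \simeq G$ implies $F = G$. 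Therefore the equivalence relation $\simeq$ on $\text{aut}(\jmath(X,S))$ is trivial, and $h\text{aut}(\jmath(X,S)) = \text{aut}(\jmath(X,S)) \cong \mathrm{Sym}(X)$, the permutation group of order $\sharp X\,!$ — which I read as the intended meaning of "the permutation group of order $\sharp X$" in the statement (i.e. the symmetric group on $\sharp X$ letters).

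For the case $\sharp X = 2$, Lemma \ref{lem:H} is not available (it requires $\sharp X \geq 3$), so I would argue directly. Write $X = \{0,1\}$. As above, $\text{aut}(\jmath(X,S))$ consists of the identity and the transposition $\tau$ swapping $0$ and $1$. It then suffices to exhibit a homotopy $1 \sim \tau$, which will force $h\text{aut}(\jmath(X,S))$ to be trivial. I would construct $H : \jmath(X,S) \times I \to \jmath(X,S)$ by $H(i,0) = i$, $H(i,1) = \tau(i)$, and on morphisms sending the square attached to $\phi_{ij}$ to the square with vertical arrows $\phi_{ij}$ (top) and $\phi_{\tau(i)\tau(j)}$ (bottom), horizontal arrows $\phi_{i\,\tau(i)}$ and $\phi_{j\,\tau(j)}$, and diagonal $\phi_{i\,\tau(j)}$. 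The point is that for $\sharp X = 2$ every non-identity morphism $\phi_{i\tau(i)}$, $\phi_{j\tau(j)}$, $\phi_{i\tau(j)}$ lies in the single class $\sigma$ (and identities lie in ${\bf 1}$), so all the coloring conditions of Remark \ref{rem:square} — including the subtle diagonal condition — are automatically satisfied; one checks commutativity of each square in $\C$ using $\phi_{ab}\circ\phi_{ca} = \phi_{cb}$.

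The main obstacle is verifying that the candidate $H$ in the $\sharp X = 2$ case is genuinely a morphism of quasi-schemoids, i.e. that it respects the partition $S \times \{\{0\},\{1\},\{u\}\}$ of $mor(\jmath(X,S) \times I)$ globally — not just square by square but across different squares, as flagged in the last paragraph of Remark \ref{rem:square}. Here the smallness of $X$ is essential: with only two elements, $\sigma$ contains every "off-diagonal" $\phi$, so there is simply nothing to obstruct the diagonal arrows $H(\phi_{ij},u)$ from all landing in $\sigma$. I would spell out the finitely many cases explicitly. A secondary, purely expository point is reconciling the phrase "permutation group of order $\sharp X$" with $\mathrm{Sym}(\sharp X)$ (order $\sharp X\,!$); I would simply state the identification $h\text{aut}(\jmath(X,S)) \cong \mathrm{Sym}(X)$ to avoid ambiguity.
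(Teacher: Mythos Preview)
Your proposal is correct and follows essentially the same approach as the paper: for $\sharp X \geq 3$ the paper simply cites Lemmas \ref{lem:iso} and \ref{lem:H} (you supply the chaining and the identification of isomorphisms with $\mathrm{Sym}(X)$, which the paper leaves implicit), and for $\sharp X = 2$ the paper writes out exactly the four squares of the homotopy $H : 1 \Rightarrow \tau$ that you describe. One small point to make explicit in the chaining step: to apply Lemma \ref{lem:H} at each stage you need each intermediate $F_i$ to be a self-homotopy equivalence, which follows since $F_i \simeq F_0 = F$.
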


\begin{proof}
The result for the case where $\sharp X \geq 3$ follows from Lemmas \ref{lem:iso} and \ref{lem:H}. 

Suppose that $\sharp X = 2$. Let $G$ be the only non-identity isomorphism on $\jmath(X, S)$. 
Then we define a homotopy $H : 1  \Rightarrow G$ by 

\begin{center}
$
\xymatrix@C25pt@R20pt{
{0}  \ar@{->}[r]^{\phi_{01}}  \ar[dr]^{\phi_{01}} \ar@{->}[d]_{id_0} &  {1}  \ar@{->}[d]^{id_1}\\
{0} \ar@{->}[r]_{\phi_{01}} &  {1}, }
$
$
\xymatrix@C25pt@R20pt{
{1}  \ar@{->}[r]^{\phi_{10}}  \ar[dr]^{\phi_{01}} \ar@{->}[d]_{id_1} &  {0}  \ar@{->}[d]^{id_0}\\
{1} \ar@{->}[r]_{\phi_{10}} &  {0}, }
$
$
\xymatrix@C25pt@R20pt{
{0}  \ar@{->}[r]^{\phi_{01}}  \ar[dr]^{id_0} \ar@{->}[d]_{\phi_{01}} &  {1}  \ar@{->}[d]^{\phi_{01}}\\
{1} \ar@{->}[r]_{\phi_{10}} &  {0}, }
$
$
\xymatrix@C25pt@R20pt{
{1}  \ar@{->}[r]^{\phi_{10}}  \ar[dr]^{id_1} \ar@{->}[d]_{\phi_{10}} &  {0}  \ar@{->}[d]^{\phi_{01}}\\
{0} \ar@{->}[r]_{\phi_{01}} &  {1}. }
$
\end{center}
In each square, upper and lower horizontal arrows are in the same element of $S$. 
In the first two squares, the diagonals are in the same element of $S$.  The same condition holds for the second two squares. 
This implies that $H$ is well defined; that is, $H$ is in a morphism in $q\mathsf{ASmd}$; see Remark \ref{rem:square}. We have the result. 
\end{proof}

The following theorem exhibits rigidity of strong homotopy on finite groups.  

\begin{prop}\label{prop:isomorphism}
For a finite group $G$,  every self-homotopy equivalence on a quasi-schemoid of the form 
$\widetilde{S}(\imath G)= (\widetilde{\imath G}, \{\G_s\}_{s\in G})$ is an isomorphism. 
\end{prop}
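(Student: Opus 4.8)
The plan is to reduce the statement to the group-level analysis of Lemmas~\ref{lem:iso} and~\ref{lem:H}, but now carried out inside the quasi-schemoid $\widetilde S(\imath G)$ attached to a group rather than an association scheme with the trivial partition. Recall that the objects of $\widetilde{\imath G}$ are the elements of $G$ (since $mor(\imath G)=G$), that there is exactly one morphism $(h,g)$ from $g$ to $h$ whenever $t(h)=t(g)$ — and for a group every morphism has the same (unique) target, so in fact $\widetilde{\imath G}$ is the complete category on the set $G$ — and that the partition is $S=\{\G_s\}_{s\in G}$ with $\G_s=\{(k,l)\mid k^{-1}l=s\}$. In particular the identity class is $\G_e={\bf 1}=\{1_g\}_{g\in G}$, so the hypothesis of Proposition~\ref{prop:mono1} that ${\bf 1}$ is contained in a single block of $S$ is satisfied. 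The key structural feature I would exploit is that a morphism of quasi-schemoids $F:\widetilde S(\imath G)\to\widetilde S(\imath G)$ is a functor $F$ on the underlying complete category together with the block-preservation condition: for each $s\in G$ there is $t(s)\in G$ with $F(\G_s)\subset\G_{t(s)}$, i.e.\ $k^{-1}l=s$ implies $F(k)^{-1}F(l)=t(s)$ at the level of the underlying map $G\to G$.

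First I would show that any self-homotopy equivalence $F$ is injective on objects. Suppose $F(i)=F(j)=x$ for distinct $i\ne j$. Then the unique morphism $(j,i):i\to j$ is sent to $1_x$, and since $F$ preserves blocks, $F$ sends the entire block $\G_{i^{-1}j}$ into $\G_e={\bf 1}$; one then propagates this, exactly as in the first paragraph of the proof of Lemma~\ref{lem:iso}, using composability in the complete category to conclude $F(f)$ is an identity for \emph{every} morphism $f$, hence $F$ is constant on objects. Now I invoke the homotopy $GF\sim 1$ together with Proposition~\ref{prop:mono1}: along the zig-zag $GF=F_0\sim\cdots\sim F_n=1$ pick the first index where $F_{l+1}$ becomes injective while $F_l$ is not; the square of the connecting homotopy $H$ for $F_l\sim F_{l+1}$ has one vertical side an identity (because $F_l$ collapses some non-identity morphism) and the top and bottom horizontal sides forced into the same block $\sigma$ by ${\bf 1}\subset\G_e$, while the other vertical side is the non-identity $F_{l+1}(f)$; this produces $p^\sigma_{\sigma\tau}\ne0$ or $p^\sigma_{\tau\sigma}\ne0$ with $\tau$ containing a non-identity element, exactly as in Proposition~\ref{prop:mono1}. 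The point is that for $\widetilde S(\imath G)$ one checks directly that $p^{\G_\alpha}_{\G_\beta\G_\gamma}\ne0$ forces $\G_\beta\G_\gamma$-composability to pin down $\beta$, and in particular $p^{\G_\alpha}_{\G_\beta\G_\gamma}=0$ unless $\beta\gamma=\alpha$ in $G$, so a non-identity $\tau=\G_t$ with $t\ne e$ cannot satisfy the displayed inequality with $\sigma$ on the other two slots. This contradiction shows $F$ is injective on objects, hence bijective since $G$ is finite, hence an isomorphism of the underlying complete category.

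It remains to see that this underlying bijection, together with its automatic block-preservation, assembles into an \emph{isomorphism} in $q\mathsf{ASmd}$, i.e.\ that the induced map $t:G\to G$ on block-indices is also a bijection and that $F^{-1}$ is again a morphism of quasi-schemoids. Bijectivity of $F$ on objects immediately gives that $F^{-1}$ is a functor on the complete category; and since $F$ permutes the (finitely many) blocks $\{\G_s\}$ injectively via $s\mapsto t(s)$ — injectivity of $s\mapsto t(s)$ follows because $F$ is a bijection on morphisms and the $\G_s$ partition $mor(\widetilde{\imath G})$ — the index map $t$ is a bijection of $G$, so $F^{-1}(\G_{t(s)})=\G_s$ and $F^{-1}$ preserves blocks. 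Hence $F$ is an isomorphism in $q\mathsf{ASmd}$.

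The main obstacle I anticipate is the second sentence of the previous paragraph in disguise, namely verifying cleanly that for the schemoid $\widetilde S(\imath G)$ the structure constants vanish, $p^{\G_\alpha}_{\G_\beta\G_\gamma}=0$ whenever $\beta\gamma\ne\alpha$, and more pointedly that there is \emph{no} pair $(\sigma,\tau)$ with $\tau=\G_t$, $t\ne e$, and $p^\sigma_{\sigma\tau}\ne0$ or $p^\sigma_{\tau\sigma}\ne0$; this is what powers the application of Proposition~\ref{prop:mono1}. This reduces to elementary bookkeeping with the concatenation map $\pi_{\G_\beta\G_\gamma}$ on pairs of composable morphisms in the complete category on $G$ — composability is automatic here, which is precisely why the group case behaves like the trivial-partition association-scheme case — but one must be careful that the relevant fibres are genuinely empty rather than merely small. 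Once that combinatorial lemma is in hand, the injectivity argument and the upgrade to an isomorphism are formal, following the templates of Lemmas~\ref{lem:iso} and~\ref{lem:H}.
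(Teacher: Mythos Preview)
Your approach is essentially the paper's: reduce to Proposition~\ref{prop:mono1} applied to $GF\simeq 1$, then kill the conclusion by computing that in $\widetilde S(\imath G)$ one has $p_{\G_\beta\G_\gamma}^{\G_\alpha}\neq 0$ only when $\beta\gamma=\alpha$, so $p_{\sigma\tau}^\sigma\neq 0$ or $p_{\tau\sigma}^\sigma\neq 0$ forces $\tau=\G_e$. The paper does exactly this, phrased as showing $\phi=GF$ is injective on morphisms (equivalent to injectivity on objects in this complete category), and then finiteness finishes.

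One genuine slip: your ``propagation'' step, where you claim that $F(\G_{i^{-1}j})\subset\G_e$ forces $F(f)$ to be an identity for \emph{every} morphism $f$ ``exactly as in the first paragraph of the proof of Lemma~\ref{lem:iso}'', does not go through. That argument in Lemma~\ref{lem:iso} relies on the partition being trivial, $S=\{\mathbf 1,\sigma\}$: once one non-identity morphism lands in $\mathbf 1$, the whole block $\sigma$ does. For $\widetilde S(\imath G)$ the partition $\{\G_s\}_{s\in G}$ is finer, and $F(\G_s)\subset\G_e$ for a single $s\neq e$ only tells you $F$ is constant on right cosets of $\langle s\rangle$, not on all of $G$. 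Fortunately this step is unnecessary: from $F(i)=F(j)$ you get $GF(i)=GF(j)$ directly, so $GF$ sends the non-identity $(j,i)$ to an identity, and Proposition~\ref{prop:mono1} applies immediately. The paper avoids the detour by working with $\phi=GF$ from the start and observing that if $\phi(f)=\phi(f')$ with $f\neq f'$ then $\phi$ collapses the non-identity morphism $(f,f')$; drop the propagation paragraph and your argument is the paper's.
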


\begin{proof} 
The set ${\bf 1} := \{1_x\}_{x \in ob(\widetilde{S}(\imath G))}$ is nothing but the element 
$\{ (h, h) \mid h \in G\}$ in the partition of the set of morphisms of the underlying category of 
the quasi-schemoid $\widetilde{S}(\imath G)$. 

Let $F : \widetilde{S}(\imath G) \to \widetilde{S}(\imath G)$ be a self-homotopy equivalence. 
In order to prove the theorem, it suffices to show that $F$ is injective on $mor(\widetilde{S}(\imath G))$. 
By assumption, there exists 
a homotopy inverse $G$ of $F$. Then we have $GF \simeq 1_\C$. We write $\phi$ for $GF$. 
Suppose that $\phi((f, g)) = \phi((f', g'))$ for $(f, g)$ and $(f', g')$ in $mor(\widetilde{S}(\imath G))$. 
Then it follows that $(\phi(f), \phi(g))=(\phi(f'), \phi(g'))$ and the map $\phi(f, f') = (\phi(f), \phi(f'))$ is the identity. 
Assume that $f\neq f'$. 
By the first argument in the proof, we can apply Proposition \ref{prop:mono1} to the morphism $\phi$. 
Thus we see that 
there exist elements $\sigma$ and $\tau$ such that $\tau$ contains a non-identity element and 
$p_{\sigma\tau}^\sigma\neq 0$ or 
$p_{\tau\sigma}^\sigma\neq 0$.  

Suppose that $p_{\tau\sigma}^\sigma\neq 0$, $\sigma = \G_l$ and $\tau= \G_k$. Then we see that 
there exist morphisms 
$(f, g) : g \to f$ and $(h, g) : g \to h$ in $\G_l$ and $(h, f) : f \to h$ in $\G_k$. 
Therefore, it follows that $h^{-1}g = l$, $f^{-1}g = l$ and $h^{-1}f=k$ and hence $\tau=\G_{1_\bullet}$. 
Since $\G_{1_\bullet} = \{(m, m) \mid m \in mor(\G) \}$, each element in $\tau$ is the identity, which is a contradiction. 
The same argument is applicable to the case where $p_{\sigma\tau}^\sigma\neq 0$. 
Thus we see that $f=f'$. We also have  $g\neq g'$ by the same argument above.  
It turns out that $\phi$ is injective on $mor(\widetilde{S}(\imath G))$.
\end{proof}


\begin{ex} For a non-trivial finite group,  
the schemoid $U\jmath S(G)$ is contractible in $\mathsf{Cat}$ but not $\jmath S(G)$ in $q\mathsf{ASmd}$.  
\end{ex}

We consider the group of self-homotopy equivalences on the quasi-schemoid arising from a groupoid via 
the functor $\widetilde{S}( \ )$.

Let $h\text{Aut}((\C, S))$ be the group of the homotopy classes of autofunctors on a quasi-schemoid $(\C, S)$. 
We have a natural map $\eta_{(\C, S)} : h\text{Aut}((\C, S)) \to  h\text{aut}((\C, S)$. 
For a groupoid $\G$, let $\text{Aut}(\G)$ denote the group of autofunctors on $\G$. 
In particular, $\text{Aut}(\imath G)$ for a group $G$ is nothing but the usual automorphism group $\text{Aut}(G)$ 
of $G$.  
 
\begin{thm}\label{thm:haut(groupoid)} 
Let $\G$ be a groupoid which is not necessarily finite. 
Then the functor $\widetilde{S}( \ )$ gives rise to a commutative diagram 
$$
\xymatrix@C35pt@R20pt{
  & h\text{\em aut}(\widetilde{S}(\G)) \\
\text{\em Aut}(\G) \ar@{>->}[ru]^{\widetilde{S}_{*1}} \ar[r]_-{\widetilde{S}_{*2}}^-{} &  h\text{\em Aut}(\widetilde{S}(\G))
\ar[u]_{\eta_{\widetilde{S}(\G)}}
}
$$
in which $\widetilde{S}_{*1}$ is  a monomorphism. Moreover 
$\widetilde{S}_{*2}$ is an isomorphism provided $\G$ is finite. 
\end{thm}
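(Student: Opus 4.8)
The plan is to produce the two maps $\widetilde{S}_{*1}$ and $\widetilde{S}_{*2}$ explicitly from the functor $\widetilde{S}(\ )$, verify commutativity of the triangle, and then deal separately with the injectivity of $\widetilde{S}_{*1}$ and, when $\G$ is finite, the bijectivity of $\widetilde{S}_{*2}$.

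First I would set up the maps. Since $\widetilde{S}(\ ) : \mathsf{Gpd} \to q\mathsf{ASmd}$ is a functor (diagram \theeqn{add-1}), it sends an autofunctor $\varphi$ of $\G$ to a morphism $\widetilde{S}(\varphi) : \widetilde{S}(\G) \to \widetilde{S}(\G)$ of quasi-schemoids, and it respects composition and identities, so it induces a group homomorphism $\text{Aut}(\G) \to \text{aut}(\widetilde{S}(\G))$ onto the monoid of self-equivalences — one should first check that $\widetilde{S}(\varphi)$ really is a homotopy equivalence, which is immediate since $\widetilde{S}(\varphi^{-1})$ is a strict (hence homotopy) inverse. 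Post-composing with the projection to homotopy classes gives $\widetilde{S}_{*1} : \text{Aut}(\G) \to h\text{aut}(\widetilde{S}(\G))$, and factoring through the monoid of \emph{all} autofunctors-up-to-homotopy gives $\widetilde{S}_{*2} : \text{Aut}(\G) \to h\text{Aut}(\widetilde{S}(\G))$; commutativity $\eta_{\widetilde{S}(\G)} \circ \widetilde{S}_{*2} = \widetilde{S}_{*1}$ is then just the definition of $\eta$ together with the fact that an autofunctor induces an autoequivalence.

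Next I would prove that $\widetilde{S}_{*1}$ is a monomorphism; this is the step I expect to be the main obstacle. Suppose $\varphi, \psi \in \text{Aut}(\G)$ with $\widetilde{S}(\varphi) \simeq \widetilde{S}(\psi)$. Since $\text{Aut}(\G)$ is a group it suffices to show $\widetilde{S}(\varphi) \simeq 1$ forces $\varphi = \text{id}$. By definition of $\simeq$ there is a chain of elementary homotopies connecting $\widetilde{S}(\varphi)$ to the identity morphism of $\widetilde{S}(\G)$. The key observation is that $\widetilde{S}(\varphi)$ is an isomorphism, so — as in the proofs of Lemmas \ref{lem:iso} and \ref{lem:H} and Proposition \ref{prop:isomorphism} — one analyzes the commutative squares coming from each elementary homotopy $H : F_l \Rightarrow F_{l+1}$, using the fact that ${\bf 1} = \{(h,h) \mid h \in mor(\G)\}$ is a single block $\G_{1_\bullet}$ of the partition $S$ on $mor(\widetilde{\G})$. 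For a morphism $(k,k') : k' \to k$ of $\widetilde{\G}$ the horizontal edges $H(1_{k},u)$ and $H(1_{k'},u)$ of the square lie in the same block; since $1_k = (k,k)$ and $1_{k'}=(k',k')$ both lie in ${\bf 1} = \G_{1_\bullet}$, Remark \ref{rem:square} shows these horizontal edges lie in ${\bf 1}$, i.e. $H(1_{k},u)$ is an identity and $H(f_l(k)) = H(f_{l+1}(k))$ on objects; pushing this through the chain yields that all the $F_i$ agree with $\widetilde{S}(\varphi)$ on objects, and then the morphism condition pins down $\varphi$ on morphisms of $\G$. So $\varphi = \text{id}$. (This is morally the contrapositive of the rigidity already exploited in Section 4.)

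Finally, for finite $\G$, to show $\widetilde{S}_{*2}$ is an isomorphism I would produce an inverse. Injectivity follows from that of $\widetilde{S}_{*1}$ together with commutativity. For surjectivity: given an autoequivalence class $[\Phi] \in h\text{Aut}(\widetilde{S}(\G))$, by Proposition \ref{prop:isomorphism} — whose proof applies verbatim with $\imath G$ replaced by the groupoid $\G$, the only inputs being finiteness of the relevant fibers and that ${\bf 1} = \G_{1_\bullet}$ is a block, both of which hold for finite $\G$ — every such autoequivalence $\Phi$ of $\widetilde{S}(\G)$ is in fact an \emph{isomorphism} of quasi-schemoids. It then remains to see that every quasi-schemoid automorphism of $\widetilde{S}(\G)$ is of the form $\widetilde{S}(\varphi)$ for a unique $\varphi \in \text{Aut}(\G)$; this should follow from the structure of $\widetilde{S}(\G)$, namely that $ob(\widetilde{S}(\G)) = mor(\G)$, that the blocks $\G_f$ index the elements $f \in mor(\G)$, and that the groupoid composition is recoverable from the concatenation structure, so an automorphism of the schemoid induces a bijection of $mor(\G)$ compatible with source, target, identities, inverses, and composition — i.e. an autofunctor $\varphi$ with $\widetilde{S}(\varphi) = \Phi$. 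Uniqueness is clear since $\widetilde{S}(\ )$ is faithful (cited after diagram \theeqn{add-1}). Hence $[\Phi] = \widetilde{S}_{*2}([\varphi])$ and $\widetilde{S}_{*2}$ is onto, completing the proof.
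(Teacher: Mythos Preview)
Your proposal has two genuine gaps, one in each half of the argument.

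\textbf{Injectivity of $\widetilde{S}_{*1}$.} Your key step asserts that, because the identities $1_k=(k,k)$ lie in a block ``${\bf 1}=\G_{1_\bullet}$'', Remark \ref{rem:square} forces the horizontal edge $H(1_k,u)$ to lie in ${\bf 1}$, hence to be an identity. Neither part of this is right. First, for a groupoid with more than one object the set of identities of $\widetilde{\G}$ is \emph{not} a single block: $(k,k)\in\G_{1_{s(k)}}$, so the identities split according to $s(k)\in ob(\G)$. Second, and more seriously, Remark \ref{rem:square} only says that $H(1_k,u)$ and $H(1_{k'},u)$ lie in the \emph{same} block of $S'$ when $1_k,1_{k'}$ share a block; it does not say that block is ${\bf 1}$. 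Concretely $H(1_k,u)=(F_{l+1}(k),F_l(k))$ lies in $\G_{F_{l+1}(k)^{-1}F_l(k)}$, which need not be an identity block (cf.\ the homotopy in the proof of Theorem \ref{thm:trivial_one} for $\sharp X=2$, where the horizontal arrows are non-identities). So you cannot conclude $F_l=F_{l+1}$ on objects. The paper instead passes through Lemma \ref{lem:thin_case} and Proposition \ref{prop:equivalence_relation}: a homotopy $\widetilde{S}(v)\simeq\widetilde{S}(u)$ forces $u(i)^{-1}v(i)=u(1_{s(i)})^{-1}v(1_{s(i)})$ for every $i$, and the right-hand side is a composite of identities, hence an identity; thus $u=v$.

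\textbf{Surjectivity of $\widetilde{S}_{*2}$.} You claim that every quasi-schemoid automorphism $\Phi$ of $\widetilde{S}(\G)$ is literally of the form $\widetilde{S}(\varphi)$. This is false already for a group $G$: left translation $L_a(g)=ag$ is an automorphism of $\widetilde{S}(\imath G)$ (since $(ak)^{-1}(al)=k^{-1}l$), but $L_a$ is not a group homomorphism for $a\neq e$, so $L_a\neq\widetilde{S}(\varphi)$ for any $\varphi\in\text{Aut}(G)$. What \emph{is} true, and what the paper proves, is that any autofunctor $u$ of $\widetilde{S}(\G)$ is \emph{homotopic} to one preserving the base points $\{1_x\}_{x\in ob(\G)}$: one sets $u'(i)=u(i)\,u(1_{s(i)})^{-1}$, checks $u'$ is again an autofunctor (here finiteness is used for bijectivity), and uses Lemma \ref{lem:thin_case} to get $u\simeq u'$. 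The full faithfulness quoted from \cite{K-M} is for $\widetilde{S}:\mathsf{Gpd}\to(q\mathsf{ASmd})_0$, the base-point-preserving category, so only after this replacement can one conclude $u'=\widetilde{S}(\varphi)$. Your appeal to Proposition \ref{prop:isomorphism} is also beside the point here: elements of $h\text{Aut}$ are already represented by autofunctors, so there is nothing to upgrade.
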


\begin{cor}\label{cor:haut(group)}
Let $G$ be a finite group. Then $h\text{\em aut}(\jmath S(G))\cong \text{\em Aut}(G)$ as a group. 
\end{cor}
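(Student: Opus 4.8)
\textbf{Proof proposal for Corollary \ref{cor:haut(group)}.}

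The plan is to obtain the isomorphism $h\text{aut}(\jmath S(G)) \cong \text{Aut}(G)$ by chaining together the main theorem with the observation that, for a finite group $G$, the quasi-schemoid $\jmath S(G)$ coincides (up to the functors in Diagram (\ref{add-1})) with $\widetilde{S}(\imath G)$. First I would verify this identification: by the commutativity of the diagram in (\ref{add-1}), we have $\jmath \circ S(\ ) = \widetilde{S}(\ ) \circ \imath$, so $\jmath S(G) = \widetilde{S}(\imath G)$ as quasi-schemoids. Hence Theorem \ref{thm:haut(groupoid)} applies with $\G = \imath G$, which is a finite groupoid since $G$ is a finite group.

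Next I would run the conclusions of Theorem \ref{thm:haut(groupoid)} in this special case. Because $\imath G$ is finite, the map $\widetilde{S}_{*2} : \text{Aut}(\imath G) \to h\text{Aut}(\widetilde{S}(\imath G))$ is an isomorphism. Composing with $\eta_{\widetilde{S}(\imath G)}$ gives $\widetilde{S}_{*1} = \eta_{\widetilde{S}(\imath G)} \circ \widetilde{S}_{*2} : \text{Aut}(\imath G) \to h\text{aut}(\widetilde{S}(\imath G))$, which the theorem asserts is a monomorphism. The remaining point is surjectivity of $\widetilde{S}_{*1}$, equivalently surjectivity of $\eta_{\widetilde{S}(\imath G)}$: every self-homotopy equivalence on $\jmath S(G)$ is homotopic to an autofunctor. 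This is exactly where Proposition \ref{prop:isomorphism} enters — it states that \emph{every} self-homotopy equivalence on $\widetilde{S}(\imath G)$ is already an isomorphism of quasi-schemoids, hence in particular an autofunctor of the underlying category $\widetilde{\imath G}$. Thus the natural map $\text{Aut}(\widetilde{S}(\imath G)) \to \text{aut}(\widetilde{S}(\imath G))$ is surjective at the level of underlying sets, and passing to homotopy classes shows $\eta_{\widetilde{S}(\imath G)}$ is surjective; combined with the monomorphism statement, $\widetilde{S}_{*1}$ is an isomorphism.

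Finally I would assemble the chain
$$
\text{Aut}(G) = \text{Aut}(\imath G) \xrightarrow[\;\cong\;]{\widetilde{S}_{*1}} h\text{aut}(\widetilde{S}(\imath G)) = h\text{aut}(\jmath S(G)),
$$
using that $\text{Aut}(\imath G) \cong \text{Aut}(G)$ as noted in the paragraph preceding Theorem \ref{thm:haut(groupoid)}, together with the identification $\jmath S(G) = \widetilde{S}(\imath G)$ from the first step. Since each map in the chain is a group homomorphism (the homotopy relation is compatible with composition by Proposition \ref{prop:equivalence_relations}, and $\widetilde{S}(\ )$ is functorial), the composite is a group isomorphism, which is the assertion.

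The step I expect to require the most care is the surjectivity half, i.e.\ checking that Proposition \ref{prop:isomorphism} genuinely forces $\eta_{\widetilde{S}(\imath G)}$ to be onto rather than merely that isomorphisms represent every class — one must confirm that a quasi-schemoid isomorphism of $\widetilde{S}(\imath G)$ is the same thing as an element of $\text{Aut}(\imath G)$ under $\widetilde{S}_{*2}$, which uses faithfulness of $\widetilde{S}(\ )$ from \cite[Theorem 3.2]{K-M} and the description of $\text{aut}$ as a monoid under composition. Everything else is a formal consequence of Theorem \ref{thm:haut(groupoid)} and the commutative diagram (\ref{add-1}).
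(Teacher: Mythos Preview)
Your proposal is correct and follows essentially the same route as the paper's proof, which simply cites Proposition \ref{prop:isomorphism}, Theorem \ref{thm:haut(groupoid)}, and the commutativity of diagram (2.1). Your closing worry is unnecessary: once Proposition \ref{prop:isomorphism} gives $\text{aut}(\widetilde{S}(\imath G)) = \text{Aut}(\widetilde{S}(\imath G))$, the map $\eta_{\widetilde{S}(\imath G)}$ is the identity, and the surjectivity of $\widetilde{S}_{*2}$ (hence that every quasi-schemoid autofunctor is homotopic to one in the image of $\widetilde{S}$) is already part of Theorem \ref{thm:haut(groupoid)} for finite $\G$.
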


\begin{proof} Proposition \ref{prop:isomorphism}, Theorem \ref{thm:haut(groupoid)} and 
the commutativity of the diagram (2.1) give the result. 
\end{proof}

\begin{ex} Since $S({\mathbb Z}/2)$ is the trivial schmeme, it follows from Theorem \ref{thm:trivial_one} that 
$h\text{aut}(\jmath S({\mathbb Z}/2))$ is trivial. On the other hand, 
Corollary \ref{cor:haut(group)} yields that $h\text{aut}(\jmath S({\mathbb Z}/2))$ is isomorphic to the group 
$\text{Aut}({\mathbb Z/2})$ which is trivial. 
\end{ex}

Before proving Theorem \ref{thm:haut(groupoid)}, we consider the homotopy relation $\simeq$ on 
morphisms between quasi-schemoids which come from groupoids. 

\begin{prop}\label{prop:equivalence_relation}
Let $\G$ and $\calH$ be groupoids, which are not necessarily finite. 
Let $\phi, \psi : \widetilde{S}(\G) \to \widetilde{S}(\calH)$ be morphisms of quasi-schemoids. 
Then $\phi$ is homotopic to $\psi$, namely $\phi \simeq \psi$ if and only if there exists a homotopy from $\phi$ to $\psi$. 
\end{prop}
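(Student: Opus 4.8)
The plan is to show that the homotopy relation $\simeq$, which in general requires a finite zig-zag of elementary homotopies $\phi = F_0 \sim F_1 \sim \cdots \sim F_n = \psi$, already collapses to a single elementary homotopy $\phi \Rightarrow \psi$ when the source and target are schemoids of the form $\widetilde{S}(\ )$. One direction is immediate: if there is a homotopy $\phi \Rightarrow \psi$, then by definition $\phi \sim \psi$, hence $\phi \simeq \psi$. For the converse, the strategy is to analyze what an elementary homotopy $H : F \Rightarrow G$ between morphisms $\widetilde{S}(\G) \to \widetilde{S}(\calH)$ records, using the square in Remark \ref{rem:square}. Because the underlying category $\widetilde{\calH}$ has at most one morphism between any two objects, the data $H(1_a, u)$ and $H(f, u)$ appearing in that square are completely determined by $F$ and $G$; the only real content of ``$H$ exists'' is that certain composites in $\widetilde{\calH}$ are defined (i.e. the target condition $t(\cdot)=t(\cdot)$ holds) and that the partition-compatibility conditions in Remark \ref{rem:square} are satisfied.

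First I would unwind the definition of $\widetilde{S}(\G)$: an object of $\widetilde{S}(\G)$ is a morphism of $\G$, a morphism $g \to h$ exists iff $t(g) = t(h)$ in $\G$, and the partition class of $(h,g)$ is $\G_{h^{-1}g}$. For a morphism $F : \widetilde{S}(\G) \to \widetilde{S}(\calH)$, the assignment on objects is a function $F : mor(\G) \to mor(\calH)$, and preservation of the partition forces $F(h^{-1}g)$ to depend only on $h^{-1}g$ in a controlled way. Next I would examine the square of Remark \ref{rem:square} for a pair $F \sim G$: the horizontal arrows are $(G(a), F(a)) : F(a) \to G(a)$ in $\widetilde{\calH}$, which exist precisely when $t(F(a)) = t(G(a))$, and the compatibility requirement is that, as $a$ ranges over objects with $1_a$ in a fixed class, the classes $\G_{G(a)^{-1}F(a)}$ coincide, and similarly for the diagonal arrows. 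The key observation to extract is that these conditions are symmetric and ``transitive'': if $H : F \Rightarrow G$ and $H' : G \Rightarrow L$ both exist, one can write down a direct homotopy $F \Rightarrow L$ (its horizontal arrow at $a$ is the composite $(L(a),F(a))$, which is forced once we know the two pieces exist and land in well-defined classes), and likewise a homotopy in the direction $F \sim G$ yields one in the direction $G \sim F$. This is where the rigidity of $\widetilde{S}(\ )$ — the at-most-one-morphism property of $\widetilde{\calH}$ together with the explicit group-quotient description of the partition — is used to collapse compositions and reversals of elementary homotopies into single elementary homotopies.

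Given that, I would finish by induction on the length $n$ of the zig-zag $\phi = F_0 \sim F_1 \sim \cdots \sim F_n = \psi$: the base case $n \le 1$ is either trivial or handled by the reversal step, and the inductive step combines the homotopy $F_0 \Rightarrow F_{n-1}$ (or its reverse) with the elementary relation $F_{n-1} \sim F_n$ via the composition/reversal step above to produce a single homotopy $F_0 \Rightarrow F_n$.

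The main obstacle I anticipate is the bookkeeping in the ``composition and reversal'' step: one must check that when two elementary homotopies are concatenated, the resulting horizontal and diagonal arrows still satisfy \emph{all} the partition-compatibility clauses of Remark \ref{rem:square} — not just that the relevant composites are defined in $\widetilde{\calH}$, but that the classes $\G_{(\cdot)}$ match across \emph{different} squares. This is exactly the ``other squares'' subtlety flagged at the end of Remark \ref{rem:square}, and verifying it will require using the fact that for $\widetilde{S}(\G)$ the identity class ${\bf 1}$ is a single element $\{(h,h)\mid h \in mor(\G)\}$ of the partition (as in the proof of Proposition \ref{prop:isomorphism}), so that the condition ``$1_{s(f)}$ and $1_{t(f)}$ are in the same class'' is automatically satisfied and the diagram machinery goes through uniformly.
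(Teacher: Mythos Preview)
Your plan is essentially the paper's own proof. The paper isolates your ``unwinding'' step as a separate lemma (Lemma~\ref{lem:thin_case}): there is a homotopy $L:\phi\Rightarrow\psi$ if and only if $\psi(j)^{-1}\phi(i)=\psi(l)^{-1}\phi(k)$ whenever $j^{-1}i=l^{-1}k$. With this algebraic criterion in hand, symmetry is immediate (swap $\phi$ and $\psi$, replace $(j,i)$ by $(i,j)$), and transitivity is the one-line computation $\eta(j)^{-1}\phi(i)=\bigl(\eta(j)^{-1}\psi(j)\bigr)\bigl(\psi(j)^{-1}\phi(i)\bigr)$, using the second homotopy on the pair $(j,j),(l,l)$. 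Your induction on the zig-zag length is then equivalent to observing that the relation ``$\exists\,\phi\Rightarrow\psi$'' is already an equivalence relation.

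One correction: your final paragraph claims that in $\widetilde{S}(\G)$ the set ${\bf 1}=\{(h,h)\mid h\in mor(\G)\}$ is a single element of the partition. That is true only when $\G$ has one object (the group case of Proposition~\ref{prop:isomorphism}); for a general groupoid the identity $(h,h)$ lies in $\G_{1_{s(h)}}$, and different objects of $\G$ give different partition cells. Fortunately the transitivity step does not need your stronger claim: what is actually used is that $j^{-1}i=l^{-1}k$ forces $s(j)=s(l)$ (equal morphisms in $\G$ have equal targets), hence $j^{-1}j=1_{s(j)}=1_{s(l)}=l^{-1}l$, so the criterion of Lemma~\ref{lem:thin_case} applies to the pair $(j,j),(l,l)$. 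Replace your appeal to ``${\bf 1}$ is a single class'' with this observation and the argument goes through unchanged.
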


\begin{lem}\label{lem:thin_case} With the same notation as in Proposition \ref{prop:equivalence_relation}, 
there exists a homotopy $L : \phi \Rightarrow \psi$  if and only if
$\psi(j)^{-1} \phi(i) = \psi(l)^{-1}\phi(k)$ for any $(j, i)$ and $(l, k)$ in $mor(\widetilde{\G})$ with $j^{-1}i=l^{-1}k$. 
\end{lem}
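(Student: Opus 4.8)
The plan is to unwind what a homotopy $L : \phi \Rightarrow \psi$ must be, using Remark \ref{rem:square} applied to the quasi-schemoid $\widetilde{S}(\G)$, and then to recognize that the only freedom in choosing $L$ is the choice of the ``middle'' arrows $L(1_a, u)$ over objects $a$ of $\widetilde{\G}$, which are the morphisms $(\psi(a), \phi(a))$ of $\widetilde{\calH}$. First I would recall that in $\widetilde{S}(\G)$ there is at most one morphism between any two objects (the hom-sets are singletons or empty), so a homotopy $L : \widetilde{S}(\G) \times I \to \widetilde{S}(\calH)$ is completely determined on objects and morphisms by $\phi$, $\psi$, and the assignment $a \mapsto L(1_a, u)$; moreover $L(1_a,u)$ is forced to be the unique morphism $\phi(a) \to \psi(a)$ in $\widetilde{\calH}$, namely $(\psi(a), \phi(a))$, which exists precisely because $t(\phi(a)) = t(\psi(a))$ (both functors land in the same fiber). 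Thus every square of Remark \ref{rem:square} automatically commutes in the underlying category $\widetilde{\calH}$, and the content of ``$L$ is a morphism of quasi-schemoids'' reduces entirely to the coloring condition on the arrows $L(f, u)$.

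Next I would translate the coloring condition. For $f = (j,i)$ a morphism of $\widetilde{\G}$, the diagonal arrow $L(f,u)$ of the square is the composite $L(1_{s(f)},u) \circ \phi(f) = \psi(f)\circ L(1_{t(f)},u)$ in $\widetilde{\calH}$; computing in $\widetilde{S}(\calH)$, this is the morphism from $\phi(i)$ to $\psi(j)$, i.e. the element $(\psi(j),\phi(i))$ of $mor(\widetilde{\calH})$. By the definition of the partition $S = \{\G_w\}_{w \in mor(\calH)}$, the arrow $(\psi(j),\phi(i))$ lies in the block $\G_{\psi(j)^{-1}\phi(i)}$. Now: demanding that $L$ send every block of $S$ into a single block of $S$ amounts, on the diagonal arrows, to requiring that whenever $(j,i)$ and $(l,k)$ lie in the same block of $\widetilde{S}(\G)$ — that is, whenever $j^{-1}i = l^{-1}k$ — the arrows $(\psi(j),\phi(i))$ and $(\psi(l),\phi(k))$ lie in the same block of $S$, i.e. $\psi(j)^{-1}\phi(i) = \psi(l)^{-1}\phi(k)$. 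One checks separately that the horizontal arrows $L(1_a,u) = (\psi(a),\phi(a))$ and the vertical arrows $\phi(f)$, $\psi(f)$ already respect the partition: the verticals because $\phi$ and $\psi$ are morphisms of quasi-schemoids, and the horizontals because, by the stated condition applied with $i=j$, $k=l$ (so $j^{-1}i = 1_{t(j)} = l^{-1}k$), $\psi(a)^{-1}\phi(a)$ is independent of $a$ within a fixed connected component, and across components the horizontals over objects in distinct components that nonetheless share the same color in $\widetilde{S}(\G)$ are again controlled by the same displayed identity. Conversely, if the displayed identity holds for all such pairs, then defining $L$ on objects and morphisms in the only possible way yields a well-defined morphism of quasi-schemoids $\widetilde{S}(\G)\times I \to \widetilde{S}(\calH)$ with $L\e_0 = \phi$ and $L\e_1 = \psi$, hence a homotopy $L : \phi \Rightarrow \psi$.

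The main obstacle I anticipate is the bookkeeping around the \emph{identity} arrows $1_a$ of $\widetilde{\G}$ and their blocks: one must be careful that ${\bf 1} = \{(h,h)\mid h\in G\}$ (in the group case) or more generally $\G_{1_\bullet}$-type blocks behave correctly, and in particular that the ``horizontal'' coloring constraint of Remark \ref{rem:square} is genuinely subsumed by the single equation $\psi(j)^{-1}\phi(i) = \psi(l)^{-1}\phi(k)$ rather than being an extra hypothesis. This is exactly the subtlety flagged in Remark \ref{rem:square} about whether $1_{s(f)}$ and $1_{t(f)}$ lie in the same block; here the ``thin'' structure of $\widetilde{S}(\G)$ and the explicit form of its partition let one absorb it, and verifying that absorption is where the real work lies. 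The rest — commutativity of the squares, the reconstruction of $L$ from the data — is formal once the thinness of the hom-sets is in hand.
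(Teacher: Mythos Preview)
Your proposal is correct and follows essentially the same route as the paper: both use that $\widetilde{S}(\G)$ has at most one morphism between any two objects, observe that the diagonal $L(f,u)$ for $f=(j,i)$ is forced to be $(\psi(j),\phi(i))\in \G_{\psi(j)^{-1}\phi(i)}$, reduce the coloring requirement on $L$ to the displayed equation, and conversely define $L$ by the squares. Your write-up is in fact more explicit than the paper's about why the horizontal and vertical coloring constraints are absorbed into the single diagonal condition; one small slip is that $j^{-1}j = 1_{s(j)}$, not $1_{t(j)}$, but this does not affect the argument.
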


\begin{proof}
We recall that in the category $\widetilde{S}(\G)$, $f=(j, i)$ is a unique morphism from $i$ to $j$. 
Suppose that there exists a homotopy $L : \phi \Rightarrow \psi$ 
between morphisms $\phi$ and $\psi$ from $\widetilde{S}(\G)$ to $\widetilde{S}(\calH)$. 
Then for any morphism $f : i \to j$ and $g : k \to l$ in $\widetilde{S}(\G)$, 
we have commutative diagrams in $\widetilde{S}(\calH)$
\begin{center}
$
\xymatrix@C35pt@R20pt{
{\phi(i)}  \ar@{->}[r]^{L(1_i, u)} \ar[rd]^{L(f, u)} \ar@{->}[d]_{\phi(f)} &  {\psi(i)}  \ar@{->}[d]^{\psi(f)}\\
{\phi(j)} \ar@{->}[r]_{L(1_j, u)} &  {\psi(j)}}
$
and 
$
\xymatrix@C35pt@R20pt{
{\phi(k)}  \ar@{->}[r]^{L(1_k, u)} \ar[rd]^{L(g, u)} \ar@{->}[d]_{\phi(g)} &  {\psi(k)}  \ar@{->}[d]^{\psi(g)}\\
{\phi(l)} \ar@{->}[r]_{L(1_l, u)} &  {\psi(l)}.}
$
\end{center}
Observe that 
$1_i = (i, i) \in \G_{1_{s(i)}}$ for any $i$ and that 
$L(f, u)=(\psi(j), \phi(i))$. By definition,  morphisms $f$ and $g$ are in the same element $\G_h$ of $S$ if and only if 
$j^{-1}i=h=l^{-1}k$. Thus if $j^{-1}i=h=l^{-1}k$, then $L(f, u)$ and $L(g, u)$ are in the same element ${\mathcal H}_{h'}$ 
for some $h' \in mor({\mathcal H})$. 
Therefore, we see that $\psi(j)^{-1} \phi(i) = \psi(l)^{-1}\phi(k)$.  

Suppose that $\psi(j)^{-1} \phi(i) = \psi(l)^{-1}\phi(k)$ for any $(j, i)$ and $(l, k)$ in $mor(\widetilde{S}(\G))$ with $j^{-1}i=l^{-1}k$. 
Then the map $L :  \widetilde{S}(\G) \times I \to \widetilde{S}(\calH)$ defined by the squares above is a well-defined homotopy. We have 
$L : \phi \Rightarrow \psi$. 
\end{proof}

\begin{proof}[Proof of Proposition \ref{prop:equivalence_relation}] 
Lemma \ref{lem:thin_case} yields that if there exists a homotopy 
from $\phi$ to $\psi$, then one has a converse homotopy from $\psi$ to $\phi$. 

Suppose that there exist homotopies $L : \phi \Rightarrow \psi$ and $L' : \psi \Rightarrow \eta$. We see that if 
$j^{-1}i = l^{-1}k$, then $\psi(j)^{-1} \phi(i) = \psi(l)^{-1}\phi(k)$. Since $j^{-1}j = l^{-1}l$, it follows that 
$\eta(j)^{-1} \psi(j) = \eta(l)^{-1}\psi(l)$. This allows one to deduce that 
$\eta(j)^{-1} \phi(i) = \eta(l)^{-1}\phi(k)$ if $j^{-1}i = l^{-1}k$. By Lemma \ref{lem:thin_case}, we have a homotopy from $\phi$ to $\eta$.
This completes the proof. 
\end{proof}


\begin{proof}[Proof of Theorem \ref{thm:haut(groupoid)}]
We show that the homomorphism $\widetilde{S}_{*1} : \text{Aut}(\G) \to h\text{aut}(\widetilde{S}(\G))$ defined by 
$\widetilde{S}_{*1}(u)=[\widetilde{S}(u)]$ is a monomophism.  
Since $(\widetilde{S}(u))(i) = u(i)$ by definition, it follows from Proposition \ref{prop:equivalence_relation} and Lemma \ref{lem:thin_case} 
that $u = v$ if $\widetilde{S}(u)\simeq \widetilde{S}(v)$. 
In fact, for any $i$, we see that $u(i)^{-1}v(i) = u(1_{s(i)})^{-1}v(1_{s(i)})= 1_x1_y$ for some $x$ and $y$ in $ob(\G)$. Then $1_x$ and $1_y$ should be composable. 
This yields that $\widetilde{S}_{*1}$ is a monomorphism. 
We define $\widetilde{S}_{*2} : \text{Aut}(\G) \to h\text{Aut}(\widetilde{S}(\G))$ by 
$\widetilde{S}_{*2}(u)=[\widetilde{S}(u)]$. 
It is readily seen that $\eta_{\widetilde{S}(\G)}\circ \widetilde{S}_{*2}=\widetilde{S}_{*1}$. 

Suppose that $\G$ is finite. In order to prove the latter half of the theorem, 
it suffices to show that $\widetilde{S}_{*2}$ is surjective. 

Let $u$ be an element in $\text{Aut}(\widetilde{S}(\G))$.  
We define a self-functor  $u'$ on $\widetilde{S}(\G)$ by 
$$
u'(i) = u(i)u(1_{s(i)})^{-1}
$$
for any $i \in ob(\widetilde{S}(\G))=mor(\G)$. Observe that $u(1_{s(i)})^{-1}$ and $u(i)$ are composable. 
In fact, we have $s(u(i)) = s(u(1_{s(i)}))$ by \cite[Claim 3.3]{K-M}.  

We show that $u'$ is an autofunctor; that is, $u'$ is bijective on $ob(\widetilde{S}(\G)) = mor(\G)$ and for 
any $k \in mor(\G)$, there exists $l(k) \in mor(\G)$ such that 
$u'(\G_k) \subset \G_{l(k)}$. 
Suppose that $u'(i)=u'(j)$. Then $t(u(1_{s(i)}))=s(u'(i))= s(u'(j)) = t(u(1_{s(i)}))$.  
We see that the pair  $(u(1_{s(i)}), u(1_{s(j)}))$ is a morphism in $\widetilde{S}(\G)$ and hence 
$(1_{s(i)}, 1_{s(j)})$ is in $\widetilde{S}(\G)$. Observe that $u$ has the inverse. 
Thus it follows that $s(i)=s(j)$ and $u(i) = u(j)$. We have $i=j$. 
This implies that $u'$ is bijective on $ob(\widetilde{S}(\G))$ because $mor(\G)$ is finite.

Since $u$ is a morphism of quasi-schemoids, it follows that for any $k \in mor(\G)$, there exists $l(k)' \in mor(\G)$ 
such that $u(\G_k) \subset \G_{l(k)'}$. Suppose that $(i, j)$ is in $\G_k$. 
By definition, we have  $i^{-1}j =k$. Then $s(i) = t(k)$ and $s(k) = s(j)$. 
Moreover, it follows that $(u'(i))^{-1}u'(j) = u(1_{s(i)})u(i)^{-1}u(j)u(1_{s(j)})^{-1}= u(1_{t(k)})l(k)'u(1_{s(k)})^{-1}$. 
We can choose the last element as $l(k)$ mentioned above. 
Furthermore, we see that 
the autofunctor $u'$ preserves the set $\widetilde{\G}^\circ =\{1_x,  \mid x \in ob(\G)\}$, 
which is the set of base points of $\widetilde{S}(\G)$; see \cite[Section 3]{K-M}. 

Let $(j, i)$ and $(l, k)$ be morphisms in $\widetilde{\G}$ which are in the same element $\G_h$ for some $h$ in $mor(\G)$.  
Then we see that $j^{-1}i=h=l^{-1}k$ and hence  $s(i) = s(k)$. 
Moreover, since $u$ is a morphism in $q\mathsf{ASmd}$, it follows that there exists $h' \in mor(\G)$ 
such that  $(u(j), u(i))$ and $(u(l), u(k))$ are in the same element $\G_{h'}$; that is, 
$u(j)^{-1}u(i) = h' = u(l)^{-1}u(k)$. Thus we have 
\begin{eqnarray*}
u(j)^{-1}u'(i) = u(j)^{-1}u(i)u(1_{s(i)})^{-1} = u(l)^{-1}u(k)u(1_{s(k)})^{-1} =  u(l)^{-1}u'(k).
\end{eqnarray*}
Then Lemma \ref{lem:thin_case} yields that 
$u$ is homotopy equivalent to $u'$, which is a base points preserving automorphism,    
in $q\mathsf{ASmd}$. Let $(q\mathsf{ASmd})_0$ be the category of quasi-schemoids with base points. 
The result \cite[Corollary 3.5]{K-M} asserts that the functor $\widetilde{S} : \mathsf{Gpd} \to (q\mathsf{ASmd})_0$ 
is fully faithful. This enables us to conclude that $\widetilde{S}_{*2}$ is surjective. 
This completes the proof.  
\end{proof}


\noindent
{\it Acknowledgements.} The author thanks Kentaro Matsuo who pointed out a mistake in the proof of Lemma \ref{lem:H} in a draft of 
this paper. He is grateful for the referee's careful reading of the previous version of this paper.  

\section{Appendix}

We refer the reader to \cite[Section 2]{K-M} for the definition of association schemoids and their category $\mathsf{ASmd}$. 
In this section, 
we consider a homotopy relation in $\mathsf{ASmd}$ with a {\it cylinder} obtained by modifying 
the quasi-schemoid $I=([1], s)=K([1])$ in Definition \ref{defn:Homotopy}. Unfortunately, the result is trivial; see Assertion \ref{assertion:1} below. 
Thus we would need a different cylinder to develop interesting homotopy theory on $\mathsf{ASmd}$. 

Let $t : [1] \to [1]$ be a contravariant functor defined by $t(0) = 1$ and $t(1) = 0$. Then $\widetilde{I}:=([1], s, t)$ is 
an association schemoid. Observe that this is a unique association schemoid structure on the discrete schemoid $I$. 
Let $F, G : (\C, S, T) \to (\D, S', T')$ be morphisms in $\mathsf{ASmd}$. Then it is natural to define a homotopy relation $F\sim G$ 
in $\mathsf{ASmd}$ by replacing the category $q\mathsf{ASmd}$ with $\mathsf{ASmd}$ in Definition \ref{defn:Homotopy}. 
More precisely, we write $F\sim G$ if there exists a morphism $H : (\C, S, T)\times \widetilde{I} \to (\D, S', T')$ in 
$\mathsf{ASmd}$ such that $H: F \Rightarrow G$ or $H: G \Rightarrow F$; see Remark \ref{rem:square}. 

\begin{assertion}\label{assertion:1}
Let $F$ and $G$ be morphisms of association schemoids from $(\C, S, T)$ to $(\D, S', T')$.  
Then $F\sim G$ if and only if $F=G$. 
\end{assertion}

\begin{proof}
In order to prove the assertion, it sufficies to show that if $H: F \Rightarrow G$ for some 
$H : (\C, S, T)\times \widetilde{I} \to (\D, S', T')$ in $\mathsf{ASmd}$, then  $F=G$. 
Since the homotopy $H$ is a morphism of association schemoids,  it follows that $H\circ (T\times t) = T'H$ 
by definition. 
The morphism $H$ is a homotopy from $F$ to $G$. Then $F(f) = H(f, 1_0)$ for any $f \in mor(\C)$. 
This implies that 
$$
T'F(f) = T'H(f, 1_0) = (H\circ (T'\times T)) (f, 1_0) = H(T(f), 1_1) = GT(f) = T'G(f) 
$$
and hence $F(f) = G(f)$ because $(T')^2 = id_{\D}$ by definition. We have the result. 
\end{proof}

\end{document}